\newtheorem{thm}{Theorem}[section]
\newtheorem{prop}[thm]{Proposition}
\newtheorem{lem}[thm]{Lemma}
\theoremstyle{definition}
\newtheorem{defn}[thm]{Definition}
\newtheorem{example}[thm]{Example}
\newtheorem{rmk}[thm]{Remark}
\newtheorem{fct}[thm]{Fact}
\newcommand{\gl}{\operatorname{GL}}
\newcommand{\im}{\operatorname{Im}}
\newcommand{\ob}{\operatorname{Ob}}
\newcommand{\aut}{\operatorname{Aut}}
\newcommand{\Aut}{\operatorname{Aut}}
\newcommand{\id}{\mathrm{Id}}
\newcommand{\colim}{\varinjlim}
\newcommand{\ifl}{\operatorname{ifl}}
\newcommand{\dfl}{\operatorname{dfl}}
\def\Q{\mathbb{Q}}
\def\R{\mathbb{R}}
\def\Z{\mathbb{Z}}
\title{Discrete Morse theory for symmetric Delta-complexes}
\author{Claudia He Yun}
\address{MPI MiS, 04103 Leipzig, Germany}
\email{\url{clyun@mis.mpg.de}}
\begin{document}

\begin{abstract}
We generalize Forman's discrete Morse theory to the context of symmetric $\Delta$-complexes. As an application, we prove that the coloop subcomplex of the link of the origin $LA^{\mathrm{trop},\mathrm{P}}_g$ in the moduli space of principally polarized tropical abelian varieties of dimension $g$ with respect to the perfect cone decomposition is contractible.
\end{abstract}
	
\maketitle

\section{Introduction}
In this paper, we develop a discrete Morse theory for symmetric $\Delta$-complexes. First introduced in \cite{Forman+discrete-morse}, discrete Morse theory is an analogue of Morse theory for CW-complexes. It considers a discrete Morse function on a CW-complex and produces another CW-complex with the same homotopy type but with fewer cells. It has seen many applications to topological spaces that are combinatorially defined, see, for example, \cite{Chari, Babson-Kozlov, Babson-bjorner-etal,Babson+Hersh+discrete-morse-and-shellability} and is an essential tool in computational topology \cite{Benedetti-Lutz-randomDMF,Bunnett-Joswig-Pfeifle}.

On the other hand, symmetric $\Delta$-complexes and symmetric CW-complexes are generalizations of the usual cell complexes. They are constructed from quotients of standard cells. They have played important roles in recent developments of tropical geometry. A notable example of symmetric $\Delta$-complexes is the link of the moduli space of tropical curves $\Delta_{g,n}$ of genus $g$ with $n$ marked points, which is the central object studied by a series of papers (\cite{CGP1,CGP2,yun2021sn,BCGY}).

A common theme in studying symmetric cell complexes is to identify contractible or acyclic subcomplexes \cite{CGP1,ACP-symCW,brandt2020top,kannan2020topology}. We add to this effort by generalizing contractibility criteria from discrete Morse Theory to the context of symmetric $\Delta$-complexes. We define a discrete Morse function analogous to the usual discrete Morse function and prove the following theorem parallel to the main theorem of Forman's discrete Morse theory. Let $B^p$ be the closed unit ball in $\R^p$ and $O(p)$ be the orthogonal group in dimension $p$.
 
\begin{thm}
Let $X$ be a finite, nonempty symmetric $\Delta$-complex and $f$ a discrete Morse function on it. Then the geometric realization $|X|$ is homotopy equivalent to a symmetric CW-complex with exactly one $p$-cell $(B^p)^\circ/G$ for each critical symmetric orbit $[\alpha]$ of dimension $p$ with respect to $f$, where $G\subset O(p)$ is finite and isomorphic to $\aut(\sigma)$.
\label{thm: main theorem}
\end{thm}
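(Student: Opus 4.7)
The plan is to adapt Forman's original strategy to the symmetric setting, working throughout at the level of symmetric orbits rather than individual cells. I would order the orbits by their $f$-values and analyze how the sublevel subcomplex $X(c) := \{[\alpha] : f([\alpha]) \le c\}$ evolves as $c$ increases. There are essentially two qualitatively different transitions to control: passing a non-critical value, where the homotopy type of $|X(c)|$ should be preserved, and passing a critical value, where exactly one symmetric cell $(B^p)^\circ/G$ should be attached.

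\textbf{Non-critical steps.} Suppose no critical orbit has value in $(c-\epsilon, c]$. Then the orbits newly entering $X(c)$ at this stage come in pairs $([\alpha],[\beta])$ matched by the discrete Morse function, with $[\beta]$ a regular codimension-one face of $[\alpha]$. I would show that $|X(c)|$ deformation retracts onto $|X(c-\epsilon)|$ by performing, equivariantly on a standard simplex representative, the usual elementary collapse that removes the open cells of the matched pair, and then descending to the quotient by the automorphism groups. Compatibility of the stabilizers of $[\alpha]$ and $[\beta]$ is what lets the collapse survive passage to the geometric realization.

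\textbf{Critical steps.} For $c = f([\alpha])$ with $[\alpha]$ critical of dimension $p$, I would show that $|X(c)|$ is obtained from $|X(c-\epsilon)|$ by attaching exactly one symmetric $p$-cell of the form $(B^p)^\circ/G$, where $G\subset O(p)$ is the image of $\aut(\sigma)$ under its standard orthogonal action on the $p$-simplex. The attaching map is induced by the face maps of $\sigma$, quotiented by the automorphism group. Iterating the two transitions in order of increasing $f$-value then assembles a symmetric CW-complex $Y$ together with a homotopy equivalence $Y \simeq |X|$ carrying exactly one cell per critical orbit.

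\textbf{Main obstacle.} The principal difficulty will be the non-critical step: ensuring that the equivariant elementary collapse of a matched orbit pair genuinely descends to a deformation retraction on $|X|$. In the non-symmetric case, collapsing a free face is a local operation on two adjacent cells; in the symmetric case, a single orbit $[\alpha]$ can be glued to itself along several of its faces via $\aut(\sigma)$, so one must check that the matched face $[\beta]$ is ``free'' in the appropriate equivariant sense, and that the collapse respects all the self-gluings. This forces the correct definitions of ``regular face'' and ``matching'' for symmetric $\Delta$-complexes so that the combinatorics really does yield a deformation retract after quotienting. Once this is in hand, induction on the orbits in order of $f$-value completes the argument in parallel with \cite{Forman+discrete-morse}.
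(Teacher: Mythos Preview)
Your proposal is correct and follows essentially the same route as the paper: filter by level subcomplexes $K(c)$, show that crossing a non-critical value is an elementary collapse of a matched pair (the paper's Proposition~\ref{prop: symmetric elementary collapse} and Theorem~\ref{thm: level subcx with no critical orbit}), show that crossing a critical value attaches a single $B^p/G$ (Theorem~\ref{thm: level subcx with critical orbit}), and then induct. The ``correct definition of regular face'' you anticipate needing is exactly the paper's notion of a \emph{permissible} face (Definition~\ref{defn: permissible face}), whose two conditions---uniqueness of the image of the gluing map and the induced isomorphism $\Aut(\beta)\cong\Aut(\alpha)$---are precisely what make the standard simplex collapse descend through the quotient by $\Aut(\alpha)$ and then to $|X|$.
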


\cite{Chari} gives a combinatorial interpretation of discrete Morse theory in terms of matchings on the Hasse diagram of the face poset of a given cell complex. We prove a similar theorem for symmetric $\Delta$-complexes. We define the notion of a permissible matching in Section \ref{sec: dmt combo description}.

\begin{thm}
Let $X$ be a finite, nonempty symmetric $\Delta$-complex and $H_X$ the Hasse diagram of the poset of symmetric orbits of $X$, not including the empty set, regarded as a graph. Let $M$ be a permissible acyclic matching on $H_X$. Then $|X|$ is homotopy equivalent to a symmetric CW-complex with exactly one $p$-cell $(B^p)^\circ/G$ for each symmetric orbit of $p$-simplices $[\sigma]$ that is not matched in $M$, where $G\subset O(p)$ is finite and isomorphic to $\aut(\sigma)$.
\label{thm: main thm combo}
\end{thm}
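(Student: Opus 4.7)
The plan is to deduce Theorem \ref{thm: main thm combo} from Theorem \ref{thm: main theorem} by producing, from a permissible acyclic matching $M$ on $H_X$, a discrete Morse function $f$ on $X$ whose critical symmetric orbits are precisely the vertices of $H_X$ unmatched by $M$. This mirrors the classical argument of \cite{Chari}, with the symmetric structure handled by the permissibility hypothesis.

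First I would form the directed graph $H_X^M$ from $H_X$ by reversing every edge that belongs to $M$ while keeping the remaining covering edges oriented from the larger orbit to the smaller one. By definition $M$ is acyclic exactly when $H_X^M$ has no directed cycles, so $H_X^M$ admits a topological sort. From such a sort one reads off an integer-valued auxiliary function $g$ on symmetric orbits that is strictly monotone along every directed edge of $H_X^M$. The candidate Morse function $f$ is then obtained by perturbing $g$ dimension by dimension so that $f([\tau]) < f([\sigma])$ whenever $[\tau] \prec [\sigma]$ is an unmatched cover and $f([\tau]) \geq f([\sigma])$ on every matched pair $\{[\tau],[\sigma]\} \in M$. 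This is the direct symmetric analogue of Chari's prescription.

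The substantive step is to verify that the orbit-level function $f$ so defined satisfies the axioms of a discrete Morse function on $X$. In the symmetric setting, the local Morse conditions at an orbit $[\sigma]$ are phrased via the faces and cofaces of a chosen representative simplex $\sigma$, and a single edge of $H_X$ can correspond to several incidences at the level of individual simplices because of the action of $\aut(\sigma)$. The main obstacle is precisely this multiplicity: an acyclic matching on the orbit Hasse diagram need not automatically single out a unique ``bad'' face or coface at each non-critical orbit, which is what the definition of a discrete Morse function demands. The \emph{permissibility} hypothesis introduced in Section \ref{sec: dmt combo description} is designed exactly to enforce the compatibility with automorphism groups that makes this local unique-pairing property hold, and the verification is essentially an unpacking of that definition.

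Once $f$ is confirmed to be a discrete Morse function whose critical symmetric orbits are the vertices of $H_X$ unmatched by $M$, Theorem \ref{thm: main theorem} applies directly and produces a symmetric CW-complex homotopy equivalent to $|X|$ with exactly one $p$-cell $(B^p)^\circ/G$, $G \cong \aut(\sigma)$, for each unmatched symmetric orbit $[\sigma]$ of dimension $p$, which is the conclusion of Theorem \ref{thm: main thm combo}.
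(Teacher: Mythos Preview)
Your proposal is correct and follows exactly the paper's approach: build the directed graph from $H_X$ by reversing the matched edges, use acyclicity to extract a real-valued function strictly decreasing along every directed edge (the paper cites \cite[Theorem~3.6]{Forman+users-guide} rather than invoking a topological sort, but this is the same thing), and then apply Theorem~\ref{thm: main theorem}. One minor remark: the ``substantive step'' you worry about is in fact immediate, since conditions~(\ref{defn: quantity of beta}) and~(\ref{defn: quantity of gamma}) of Definition~\ref{defn: DMF} already count symmetric \emph{orbits} rather than individual incidences, so no multiplicity issue arises---permissibility of $M$ is needed only for condition~(\ref{defn: exceptions only occur along permissible pairs}), and your dimension-by-dimension perturbation of $g$ is unnecessary.
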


As an application, we study the moduli spaces $A^{\mathrm{trop},\mathrm{P}}_g$ of principally polarized tropical abelian varieties of dimension $g$ with respect to the perfect cone decomposition. Denote the link of the origin of $A^{\mathrm{trop},\mathrm{P}}_g$ by $LA^{\mathrm{trop},\mathrm{P}}_g$. We strengthen \cite[Theorem 5.23]{brandt2020top}, which states that the coloop subcomplex of $LA^{\mathrm{trop},\mathrm{P}}_g$ is acyclic, i.e., it has zero reduced rational homology, to the following.
\begin{thm}
The coloop subcomplex of $LA^{\mathrm{trop},\mathrm{P}}_g$ is contractible.
\label{thm: coloop contractible}
\end{thm}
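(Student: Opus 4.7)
The plan is to apply Theorem~\ref{thm: main thm combo} to the coloop subcomplex $\mathcal{C}\subseteq LA^{\mathrm{trop},\mathrm{P}}_g$, constructing a permissible acyclic matching on the Hasse diagram of its symmetric orbit poset with exactly one unmatched orbit, and that orbit of dimension $0$. Since a single $0$-cell yields a point after passing to the homotopy-equivalent symmetric CW-complex, this will give the desired contractibility.

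The matching I would use is the equivariant version of the classical coloop matching. For each orbit $[\sigma]$ of a cone in $\mathcal{C}$, I fix a $\gl_g(\Z)$-equivariant rule that selects a distinguished coloop ray $\rho(\sigma)\subset\sigma$, for instance the $\gl_g(\Z)$-orbit-smallest such ray under a chosen well-order on orbits of primitive integer vectors in $\Z^g$, and then pair $[\sigma]$ with the orbit of the cone obtained from $\sigma$ by toggling $\rho(\sigma)$ (adding it if absent, removing it if present). This is the natural lift to the level of symmetric orbits of the chain-level cancellation used in \cite[Theorem 5.23]{brandt2020top} to establish acyclicity of $\mathcal{C}$.

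Two things then require verification. Acyclicity of the induced directed graph on the Hasse diagram should follow from a standard monotonicity argument: along any alternating path, the distinguished-coloop invariant changes monotonically with respect to the chosen well-order, so no directed cycle can close up. Second, and more delicate, is permissibility in the sense of Section~\ref{sec: dmt combo description}: for a matched pair $[\alpha]\leftrightarrow[\beta]$ one must control the interaction between the distinguished coloop and the automorphism groups $\aut(\alpha)$, $\aut(\beta)$, ensuring the toggling operation descends to symmetric orbits in a way compatible with the stabilizers. This is the main obstacle, as the equivariance requirement of the symmetric setting is stricter than in the classical Forman--Chari framework, and the canonical coloop must be preserved by $\aut(\sigma)$ or cycle through an $\aut(\sigma)$-orbit of coloops in a controlled way; this is the step that really uses the symmetric discrete Morse theory developed in the paper rather than the classical one.

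Once permissibility and acyclicity are confirmed, the unique critical orbit will be the orbit of a ray generated by a single rank-one matrix $vv^T$ for $v\in\Z^g$ primitive, which is a $0$-dimensional symmetric cell. Theorem~\ref{thm: main thm combo} then identifies $|\mathcal{C}|$ with a symmetric CW-complex consisting of a single $0$-cell, and hence with a point, proving contractibility.
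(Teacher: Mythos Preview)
Your proposal has a genuine gap: the matching you describe cannot be made permissible, and so Theorem~\ref{thm: main thm combo} does not apply to it.

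The obstruction is visible already in the simplest case. Let $[\beta]$ be the orbit of a matroidal cone with at least two coloops $c_1,c_2$. The transposition exchanging $c_1$ and $c_2$ lies in $\aut(\beta)$. If $[\alpha]$ is the orbit of the facet obtained by deleting $c_1$, then the facet obtained by deleting $c_2$ lies in the same orbit $[\alpha]$, and the two corresponding injections $[p]\to[p+1]$ have different images. Hence condition~(1) of Definition~\ref{defn: permissible face} fails, and $[\alpha]<[\beta]$ is not permissible. This is exactly the situation your ``distinguished coloop'' rule runs into: whenever $\aut(\sigma)$ acts nontrivially on the set of coloops (which it does as soon as there are two of them), no equivariant choice of a single coloop exists, and the toggling pair is never permissible. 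Your own caveat that the canonical coloop might ``cycle through an $\aut(\sigma)$-orbit of coloops in a controlled way'' is precisely where the argument breaks; there is no controlled way compatible with Definition~\ref{defn: permissible face}. Consequently a permissible acyclic matching on $\Lambda_g^{\mathrm C}$ with a single critical orbit does not arise from coloop toggling.

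The paper takes a different route that sidesteps this obstruction. Its matching $S$ pairs only cones of rank $<g$ with \emph{no} coloop to their inflation, which has exactly one coloop; here permissibility holds because the unique coloop is fixed by every automorphism. This leaves \emph{many} critical orbits: the rank~$1$ point together with every orbit whose matroid has at least two coloops. The key extra input is Lemma~\ref{lem: Lambda_g^C reflection}: each higher-dimensional critical orbit has a reflection in its automorphism group, so the attaching sphere $S^{p-1}/\aut(\sigma)$ is contractible by \cite[Proposition~5.1]{ACP-symCW}, and an iterated mapping-cone argument collapses the resulting symmetric CW-complex to a point. In other words, the contractibility is not obtained from a perfect Morse matching but from the combination of a partial matching with a separate argument about the shape of the remaining critical cells.
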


\subsection{Related work}
\begin{enumerate}
    \item Freij develops an equivariant discrete Morse theory for simplicial complexes with group actions \cite{Freij-eqvDMT}. This is different from our setup because in general there is no action of a single group on the entire symmetric $\Delta$-complex, even though each symmetric orbit admits an action of a symmetric group.
    \item Symmetric CW-complexes can be obtained as links of the origin of generalized cone complexes \cite{ACP-symCW}. In \cite{Bunnett-Joswig-Pfeifle}, the authors study symmetric CW-complexes through discrete Morse theory by considering the second barycentric subdivision of the symmetric CW-complexes. In general, the second barycentric subdivision of a symmetric $\Delta$-complex is a simplicial complex, for which there are many standard tools in combinatorial topology. However, one drawback of taking barycentric subdivisions is that the number of cells to be considered grows significantly.
    \item \cite{CGP2} develops a contractibility criterion for symmetric $\Delta$-complexes that focuses on vertices and prescribes deformation retractions based on containment of specific vertices. This contractibility theory has a similar flavor to our generalized discrete Morse theory. The two approcahes allow for different types of deformation retractions. Ours, like Forman's discrete Morse theory, works by sequentially collapsing simplices along their codimension-one faces. The contractibility criterion in \cite{CGP2} uses the inclusion and exclusion of certain vertices and is able to produce more general collapses. It would be interesting to see how these two theories are related.  
\end{enumerate}

\subsection{Outline of the paper} In Section \ref{sec: sym delta cx}, we review the necessary background on symmetric $\Delta$-complexes and give the definition of a permissible face. In Section \ref{sec: dmt}, we develop a generalized discrete Morse Theory. We discuss two approaches: using discrete Morse functions and using a combinatorial criterion on the poset of symmetric orbits of a symmetric $\Delta$-complex. Lastly, in Section $\ref{sec: Ag}$, we apply our theory to $A^{\mathrm{trop},\mathrm{P}}_g$ and show the coloop subcomplex of the link of the origin is contractible.

\subsection*{Acknowledgements} We thank Melody Chan for suggesting this direction of research and many useful discussions. We thank Sam Payne for suggesting the coloop subcomplex as an application and Madeline Brandt for answering many questions on $A^{\mathrm{trop},\mathrm{P}}_g$. We thank Michael Joswig for useful conversations.

\section{Symmetric $\Delta$-complexes}
\label{sec: sym delta cx}

In this section, we review the construction and properties of symmetric $\Delta$-complexes. We define the notion of a permissible face (Definition \ref{defn: permissible face}), which is a key definition that did not appear in Forman's discrete Morse theory. Denote the set $\{0,1,\cdots,p\}$ by $[p]$.

\begin{defn}
Let $I$ be the category whose objects are $[p]$ for each $p \geq 0$ together with $[-1] := \emptyset$ and whose morphisms are all injections. A symmetric $\Delta$-complex is a functor $X: I^{\text{op}} \to \text{Sets}$.
\label{def: sym Delta cx}
\end{defn}

\begin{rmk}
Let $i_p: [p] \to [p+1]$ be inclusion. Morphisms in $I$ are generated by permutations $[p] \to [p]$ and $i_p$.
\end{rmk}

Recall that the standard $p$-simplex $\Delta^p \subset \R^{p+1}$ is defined to be the convex hull of the standard basis vectors $e_0,\dots,e_p$, i.e., \[\Delta^p = \left\{\sum_{i=0}^p t_ie_i : \sum t_i = 1 \text{ and } t_i \geq 0 \text{ for all }i\right\}.\] Given a set map $\theta: [p] \to [q]$, it induces a map between simplices $\theta_*:\Delta^p \to \Delta^q$ where \[\theta_*\left(\sum_{i=0}^p t_ie_i\right) = \sum_{i=0}^q \left(\sum_{j \in \theta^{-1}(i)}t_j\right)e_i.\]

Let $X$ be a symmetric $\Delta$-complex. For notational simplicity, we denote $X([p])$ by $X_p$. The geometric realization of $X$ is the topological space 
\begin{equation}
|X| = \left(\coprod_p X_p \times \Delta^p\right)/\sim,
\label{eqn: geometric realization}
\end{equation}
where the equivalence relation is given by $(x,\theta_*a) \sim (\theta^*x,a)$, where $x\in X_p$, $\theta: [p] \to [q]$ an injection, and $a \in \Delta^p$. This equivalence relation is analogous to the gluing maps of a $\Delta$-complex.

We refer to an element $\alpha \in X_p$ as a $p$-simplex and write $[\alpha]$ to represent its symmetric orbit, i.e., the set of simplices $\{\rho^*(\alpha):\rho\in S_{p+1}\}$. For a $p$-simplex $\alpha$, we sometimes write $\alpha^{(p)}$ to emphasize its dimension. If an injection $\theta:[p] \to [q]$ induces a map $\theta^*(\beta) = \alpha$, we say that $\alpha$ is a face of $\beta$ and $\theta$ glues $\alpha$ to $\beta$; we write $\alpha < \beta$ in this case. Face relation is well-defined for symmetric orbits, as stated in the following lemma.

\begin{lem}
Suppose we have simplices $\alpha^{(p)} < \beta^{(q)}$, $\alpha' \in [\alpha]$ and $\beta' \in [\beta]$. Then $\alpha' < \beta'$.
\label{lem: face relation holds for sym orbits}
\end{lem}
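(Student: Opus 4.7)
The plan is simply to chase the definitions through functoriality. By hypothesis there is an injection $\theta: [p] \to [q]$ with $\theta^*(\beta) = \alpha$, and there exist permutations $\sigma \in S_{p+1}$ and $\tau \in S_{q+1}$ with $\sigma^*(\alpha) = \alpha'$ and $\tau^*(\beta) = \beta'$. To show $\alpha' < \beta'$, I need to exhibit an injection $\eta: [p] \to [q]$ in the category $I$ for which $\eta^*(\beta') = \alpha'$.

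The natural candidate, dictated by the contravariance of $X$, is $\eta := \tau^{-1} \circ \theta \circ \sigma$. This is an injection $[p] \to [q]$ because $\sigma$ and $\tau^{-1}$ are bijections and $\theta$ is an injection, so $\eta$ is a morphism in $I$. Using that $X$ is a functor on $I^{\mathrm{op}}$, I compute
\[
\eta^*(\beta') = (\tau^{-1} \circ \theta \circ \sigma)^*(\tau^*(\beta)) = \sigma^* \circ \theta^* \circ (\tau^{-1})^* \circ \tau^*(\beta) = \sigma^*(\theta^*(\beta)) = \sigma^*(\alpha) = \alpha',
\]
which gives the desired face relation $\alpha' < \beta'$.

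There is essentially no obstacle here: the statement is a direct bookkeeping consequence of the fact that symmetric orbits are defined by precomposition with permutations, that permutations are invertible in $I$, and that $X$ is a contravariant functor. The only point requiring any care is keeping the order of composition straight under the contravariance of $X$, so that $\eta$ really is built as $\tau^{-1}\theta\sigma$ rather than $\sigma\theta\tau^{-1}$ or similar, and verifying that the resulting map is still an injection (which is automatic since composing an injection with bijections on either side yields an injection).
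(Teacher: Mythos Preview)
Your proof is correct. The paper states this lemma without proof, treating it as an immediate consequence of the definitions; your argument is exactly the routine functoriality check one would supply, and the choice $\eta = \tau^{-1}\circ\theta\circ\sigma$ together with the contravariance computation is the expected one.
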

Note that the face relation is not a partial order on the set of simplices. It is not antisymmetric: two distinct simplices $\alpha$ and $\alpha'$ in the same symmetric orbit satisfy both $\alpha \leq \alpha'$ and $\alpha' \leq \alpha$. However, the face relation induces a partial order on the set of symmetric orbits of simplices.

\begin{defn}
We define the automorphism group of a simplex $\alpha \in X_p$ to be $\Aut(\alpha) = \{\rho \in S_{p+1}: \rho^*(\alpha) = \alpha\}$.
\label{defn: aut of a simplex}
\end{defn}

\noindent We now give the definition of permissibility.

\begin{defn}
Let $X$ be a symmetric $\Delta$-complex. Suppose $\rho$ glues $\alpha^{(p)}$ to $\beta^{(p+1)}$. We say $\alpha$ is a \textit{permissible} face of $\beta$ if they satisfy the following conditions:
\begin{enumerate}
    \item Any two injections $\theta:[p]\to[p+1]$ and $\theta':[p]\to[p+1]$ such that $\theta^*(\beta)=(\theta')^*(\beta)=\alpha$ satisfy $\im \theta = \im \theta'$.
    \item The function $f_\rho:\Aut(\beta) \to \Aut(\alpha)$ given by $f(\pi) = \rho^{-1}|_{\im \rho}\circ \pi \circ \rho$ induces an isomorphism.
\end{enumerate}
\label{defn: permissible face}
\end{defn}

Implicit in the definition of $f_\rho(\pi)$ above is that $\pi$ restricts to a permutation on $\im \rho$. We verify this claim under assumption (1).

\begin{proof}
Let $\pi \in \Aut(\beta)$ and let $\rho:[p] \to [p+1]$ be an injection such that $\rho^*(\beta) = \alpha$. Suppose $\pi$ does not restrict to a permutation on $\im \rho$. Then $\pi\circ\rho$ is another map that glues $\alpha$ to $\beta$ but $\im(\pi\circ\rho) \neq \im \rho$. This contradicts item (1). Therefore, $f_\rho(\pi)$ exists and satisfies that $\pi\circ\rho = \rho\circ f_\rho(\pi)$. So $f_\rho(\pi) \in \Aut(\alpha)$.
\end{proof}

\begin{lem}
The function $f_\rho$ is unique up to conjugation by a permutation of $[p]$.
\end{lem}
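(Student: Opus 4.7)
The plan is a direct computation. The key point is that condition (1) of Definition \ref{defn: permissible face} rigidly controls how the gluing map $\rho$ can vary, which forces any ambiguity in $f_\rho$ to reduce to a single conjugation by a permutation of $[p]$.

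First, I would take a second injection $\rho' : [p] \to [p+1]$ with $(\rho')^*(\beta) = \alpha$ and invoke condition (1) to conclude $\im \rho' = \im \rho$. Since $\rho$ and $\rho'$ corestrict to bijections onto the same $(p+1)$-element subset of $[p+1]$, the composite $\sigma := \rho^{-1}|_{\im \rho} \circ \rho'$ is a well-defined permutation of $[p]$, and by construction $\rho' = \rho \circ \sigma$ and $\im \rho' = \im \rho$.

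Next, I would substitute $\rho' = \rho \circ \sigma$ into the formula defining $f_{\rho'}$, using $(\rho')^{-1}|_{\im \rho'} = \sigma^{-1} \circ \rho^{-1}|_{\im \rho}$, to obtain for every $\pi \in \Aut(\beta)$ the identity
\[
f_{\rho'}(\pi) \;=\; (\rho')^{-1}|_{\im \rho'} \circ \pi \circ \rho' \;=\; \sigma^{-1} \circ \bigl(\rho^{-1}|_{\im \rho} \circ \pi \circ \rho\bigr) \circ \sigma \;=\; \sigma^{-1} \circ f_\rho(\pi) \circ \sigma,
\]
which is precisely the claim that $f_{\rho'}$ is the conjugate of $f_\rho$ by the permutation $\sigma$ of $[p]$. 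As a free byproduct, applying $(\rho')^*$ to $\beta$ gives $\alpha = \sigma^*(\rho^*(\beta)) = \sigma^*(\alpha)$, so in fact $\sigma \in \Aut(\alpha)$, though the lemma only asks for a permutation of $[p]$.

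The only real obstacle is notational care: keeping track of which maps live on $[p]$, on $[p+1]$, or on $\im \rho \subset [p+1]$, and invoking condition (1) to upgrade the set-level equality $\im \rho = \im \rho'$ to the factorization $\rho' = \rho \circ \sigma$. Once that bookkeeping is in place, the lemma is an immediate consequence of the associativity of composition.
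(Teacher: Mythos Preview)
Your proposal is correct and follows essentially the same approach as the paper's proof: both arguments factor $\rho' = \rho \circ \sigma$ for a permutation $\sigma$ of $[p]$ and then substitute into the defining formula to obtain $f_{\rho'} = \sigma^{-1} \circ f_\rho \circ \sigma$. Your version is more explicit in invoking condition (1) of Definition~\ref{defn: permissible face} to justify $\im \rho = \im \rho'$ (which the paper leaves implicit), and your observation that in fact $\sigma \in \Aut(\alpha)$ is a nice bonus not mentioned in the paper.
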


\begin{proof}
Suppose $\rho,\rho':[p]\to [p+1]$ are both maps that glues $\alpha$ to $\beta$. Then there is a permutation $i:[p] \to [p]$ such that $\rho' = \rho \circ i$. So we have $f_{\rho'}(\pi) = i^{-1}\circ \rho^{-1}|_{\im \rho} \circ \pi \circ \rho \circ i$.
\end{proof}

\noindent We show that permissibility is well-defined for symmetric orbits.

\begin{lem}
Suppose $\alpha^{(p)}$ is a permissible face of $\beta^{(p+1)})$. Suppose further that $\alpha'\in[\alpha]$ and $\beta'\in[\beta]$. Then $\alpha'$ is a permissible face of $\beta'$.
\label{lem: permissibility holds for sym orbits}
\end{lem}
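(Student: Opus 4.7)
The plan is to reduce everything to the permissibility of $\alpha$ as a face of $\beta$ by transporting witnesses via the permutations that move $\alpha$ to $\alpha'$ and $\beta$ to $\beta'$. Write $\alpha' = \sigma^*(\alpha)$ for some $\sigma \in S_{p+1}$ and $\beta' = \tau^*(\beta)$ for some $\tau \in S_{p+2}$, and let $\rho:[p]\to[p+1]$ be an injection that glues $\alpha$ to $\beta$, by hypothesis a permissible one. The natural candidate to glue $\alpha'$ to $\beta'$ is $\rho' := \tau^{-1}\circ\rho\circ\sigma$. Using contravariance of $X$ one checks $(\rho')^*(\beta') = (\tau\circ\rho')^*(\beta) = (\rho\circ\sigma)^*(\beta) = \sigma^*(\alpha) = \alpha'$, so $\rho'$ indeed glues $\alpha'$ to $\beta'$.

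For condition~(1), suppose $\theta,\theta':[p]\to[p+1]$ are two injections with $\theta^*(\beta')=(\theta')^*(\beta')=\alpha'$. Form $\mu := \tau\circ\theta\circ\sigma^{-1}$ and $\mu' := \tau\circ\theta'\circ\sigma^{-1}$; these are injections $[p]\to[p+1]$, and an identical contravariance computation shows $\mu^*(\beta) = (\mu')^*(\beta) = \alpha$. Condition~(1) for the permissible pair $(\alpha,\beta)$ then gives $\im\mu = \im\mu'$, and since $\sigma^{-1}$ is a bijection of $[p]$ while $\tau$ is a bijection of $[p+1]$, this forces $\im\theta = \im\theta'$.

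For condition~(2), I would verify that the square
\[
\begin{array}{ccc}
\Aut(\beta') & \xrightarrow{\;f_{\rho'}\;} & \Aut(\alpha') \\
{\scriptstyle c_\tau}\downarrow & & \downarrow{\scriptstyle c_\sigma} \\
\Aut(\beta) & \xrightarrow{\;f_{\rho}\;} & \Aut(\alpha)
\end{array}
\]
commutes, where $c_\tau(\pi') = \tau\pi'\tau^{-1}$ and $c_\sigma(\pi'') = \sigma\pi''\sigma^{-1}$. A direct check (using $\alpha'=\sigma^*\alpha$ and $\beta'=\tau^*\beta$) shows that these conjugations really do land in $\Aut(\beta)$ and $\Aut(\alpha)$ respectively and are group isomorphisms, and a one-line substitution using $(\rho')^{-1} = \sigma^{-1}\circ\rho^{-1}\circ\tau$ (on the relevant domains) gives $c_\sigma\circ f_{\rho'} = f_\rho\circ c_\tau$. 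Since $c_\tau$, $c_\sigma$, and $f_\rho$ are isomorphisms, so is $f_{\rho'}$.

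The main obstacle is purely bookkeeping: keeping the contravariance of $X$ straight so that the compositions $\tau\circ\theta\circ\sigma^{-1}$ and $\tau^{-1}\circ\rho\circ\sigma$ appear in the right order, and verifying that the restrictions of $\pi$, $\tau\pi\tau^{-1}$, etc.\ to the appropriate image sets are well-defined (which uses condition~(1) exactly as in the unnamed lemma immediately following Definition~\ref{defn: permissible face}). No nontrivial topology is used; the argument is a straightforward equivariant transport of the permissibility data.
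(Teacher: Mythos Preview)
Your proposal is correct and follows essentially the same approach as the paper: transport the two permissibility conditions along the permutations $\sigma\in S_{p+1}$ and $\tau\in S_{p+2}$ relating $\alpha,\alpha'$ and $\beta,\beta'$, reducing to condition~(1) and condition~(2) for $(\alpha,\beta)$. Your bookkeeping is in fact more careful than the paper's version (which has several typos in the direction of compositions), and your commutative-square formulation of condition~(2) makes the ``$f'$ is a composition of isomorphisms'' step cleaner, but the underlying argument is the same.
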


\begin{proof}
By Lemma \ref{lem: face relation holds for sym orbits}, we know $\alpha' < \beta'$. Suppose $\theta,\rho:[p] \to [p+1]$ are two injections that satisfy $\theta^*(\beta') = \alpha^*(\beta')=\alpha'$. Since $\alpha'$ and $\beta'$ are in the symmetric orbits of $\alpha$ and $\beta$, respectively, there are permutations $i:[p] \to [p]$ and $j:[p+1]\to[p+1]$ such that $i^*(\alpha')=\alpha$ and $j^*(\beta)=\beta'$. 
Now $j\circ\delta\circ i$ and $j\circ \alpha \circ i$ are two injections that glue $\alpha$ to $\beta$, so they have the same image. Then it must be true that $\im \theta = \im \rho$. 

Suppose $\phi:[p]\to[p+1]$ glues $\alpha$ to $\beta$; then $j\circ\phi\circ i$ glues $\alpha'$ to $\beta'$. Let $\pi \in \Aut(\beta')$. Define $f'$ by $f'(\pi) = (j\circ\phi\circ i)^{-1}|_{\im (j\circ\phi\circ i)} \circ \pi \circ (j\circ\phi\circ i)$. So $f' = c_i\circ f\circ c_j$, where $c_i$ and $c_j$ denote conjugation by $i$ and $j$, respectively. Since $f'$ is a composition of isomorphisms, it is an ismorphism.
\end{proof}

\noindent The following example shows permissible pairs in a symmetric $\Delta$-complex. 

\begin{example}
\label{example: permissible pairs on the half triangle}
Let $X$ be a symmetric $\Delta$-complex such that $|X|$ is the quotient of a standard 2-simplex by a reflection. The complex $X$ has a description as a symmetric $\Delta$-complex with two 0-simplices, three 1-simplices and three 2-simplices. See Figure \ref{fig: half triangle}. In this example, $[v]$ is a permissible face of $[a]$, $[w]$ of $[a]$, and $[c]$ of $[T]$. These are all the permissible pairs.
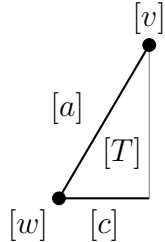
\begin{figure}[h]
    \centering
    \begin{tikzpicture}[scale=1.2]
        \draw[thick] (1,1.7) -- (0,0) -- (1,0);
        \draw[thin,gray] (1,1.7) -- (1,0);
        
        \draw[fill] (0,0) circle [radius=2pt];
        \node[below left] at (0,0) {$[w]$};
        
        \draw[fill] (1,1.7) circle [radius=2pt];
        \node[above] at (1,1.7) {$[v]$};
        
        \node[left] at (0.4,1) {$[a]$};
        \node[below] at (0.5,0) {$[c]$};
        \node at (0.7,0.5) {$[T]$};
    \end{tikzpicture}
    \caption{Half triangle. The gray edge is not a cell in this complex; it is a reference line.}
    \label{fig: half triangle}
\end{figure}
\end{example}

Permissibility in a symmetric $\Delta$-complex is similar to regularity in a CW-complex. In a CW-complex, if $\gamma^{(p-1)}$ is a regular face of $\alpha^{(p)}$ and $\alpha^{(p)}$ is a regular face of $\beta^{(p+1)}$, then there exists a $p$-cell $\alpha'\neq \alpha$ with $\gamma < \alpha' < \beta$. However, this statement fails for symmetric $\Delta$-complexes. Consider the unique symmetric $\Delta$-complex with one 0-simplex, one 1-simplex, and one 2-simplex. Its geometric realization is the quotient of a standard 2-simplex by the dihedral group $D_6$. There is only one 1-simplex between the vertex and the 2-simplex. The following lemma shows that permissibility prevents such undesirable situations. This lemma will be useful in the proofs of Lemma \ref{lem: DMF only one exception} and Lemma \ref{lem: level subcx only need to check one dim above}.

\begin{lem}
Let $X$ be a symmetric $\Delta$-complex, and let $0\leq r<p$. Suppose we have $\gamma^{(r)} < \alpha^{(p)} < \beta^{(p+1)}$. Assume further that $\alpha$ is a permissible face of $\beta$. Then there exists $\alpha'^{(p)}$ such that $[\alpha'] \neq [\alpha]$ with $\gamma < \alpha' < \beta$.
\label{lem: permissible parent must have another child}
\end{lem}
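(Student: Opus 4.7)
The plan is to exploit permissibility condition (1) to pin down the image of the gluing map that realizes $\alpha$ as a face of $\beta$, and then to construct $\alpha'$ by using a different codimension-one face of $\beta$ that still contains $\gamma$. The strict inequality $r < p$ is what guarantees enough room to choose such a face.

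More concretely, fix an injection $\rho\colon [p]\to [p+1]$ with $\rho^*(\beta)=\alpha$, and let $k\in[p+1]$ be the unique element with $\im\rho=[p+1]\setminus\{k\}$. For each $j\in[p+1]$, let $\iota_j\colon [p]\to[p+1]$ be an injection with image $[p+1]\setminus\{j\}$, and set $\alpha_j := \iota_j^*(\beta)$. The first step is to observe that for $j\neq k$, $\alpha_j\notin[\alpha]$: if instead $\alpha_j=\tau^*(\alpha)$ for some $\tau\in S_{p+1}$, then $\iota_j\circ\tau^{-1}$ is a second injection gluing $\alpha$ to $\beta$ whose image equals $\im\iota_j=[p+1]\setminus\{j\}$, contradicting permissibility condition (1) since its image differs from $\im\rho$.

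Next, since $\gamma<\alpha$, there is an injection $\sigma\colon[r]\to[p]$ with $\sigma^*(\alpha)=\gamma$; hence $\gamma=(\rho\sigma)^*(\beta)$. Let $S:=\im(\rho\sigma)\subset[p+1]$, which has cardinality $r+1$ and lies inside $[p+1]\setminus\{k\}$, so in particular $k\in [p+1]\setminus S$. The complement $[p+1]\setminus S$ has size $p+1-r$, and after removing $k$ it still has size $p-r\geq 1$ by the hypothesis $r<p$. Pick any $j\in ([p+1]\setminus S)\setminus\{k\}$. Then $\im(\rho\sigma)\subset[p+1]\setminus\{j\}=\im\iota_j$, so $\rho\sigma$ factors through $\iota_j$, say $\rho\sigma=\iota_j\circ\sigma'$ for an injection $\sigma'\colon[r]\to[p]$. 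Setting $\alpha':=\alpha_j$, we compute $\gamma=(\iota_j\sigma')^*(\beta)=(\sigma')^*(\alpha')$, so $\gamma<\alpha'$, and by the previous paragraph $[\alpha']\neq[\alpha]$.

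The only mildly delicate point is the counting that ensures the existence of $j\in([p+1]\setminus S)\setminus\{k\}$; this is where the hypothesis $r<p$ is used, and without it the statement can genuinely fail (the one-simplex, one-vertex example mentioned before the lemma shows what goes wrong when $\gamma=\alpha$). Everything else is a direct, diagrammatic unpacking of Definition \ref{defn: permissible face}.
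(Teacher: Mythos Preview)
Your proof is correct and follows essentially the same approach as the paper's: both arguments construct $\alpha'$ as a codimension-one face of $\beta$ via an injection $[p]\to[p+1]$ whose image differs from that of the injection gluing $\alpha$ to $\beta$, use $r<p$ to ensure this new face can be chosen to contain $\gamma$, and invoke permissibility condition~(1) to conclude $[\alpha']\neq[\alpha]$. The paper builds the new injection explicitly by modifying the original gluing map at a single point outside the image of $[r]\to[p]$, whereas you phrase it as choosing an element $j$ of $[p+1]$ outside $\im(\rho\sigma)\cup\{k\}$ and factoring through $\iota_j$; these are the same construction up to bookkeeping.
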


\begin{proof}
Let $f:[r] \to [p]$ glue $\gamma$ to $\alpha$ and $g:[p]\to [p+1]$ glue $\alpha$ to $\beta$. Let $l$ be the unique element in $[p+1]\setminus \im g$ and $k$ be the minimal element in the set $[p]\setminus \im f$. Define $g':[p]\to [p+1]$ as follows
\[g'(i) = \begin{cases}
g(i) & i \neq k \\
l & i=k
\end{cases}.\]
We have $g'\circ f = g\circ f$ since $k$ is not in the image of $f$. Let $\alpha' = (g')^*(\beta)$. We have $f^*(\alpha') = (g'\circ f)^*(\beta) = (g\circ f)^*(\beta) = \gamma$.

The $p$-simplex $\alpha'$ is not in the symmetric orbit of $\alpha$. Suppose it is; then there is a permutation $i:[p]\to[p]$ with $i^*(\alpha')=\alpha$. So $g$ and $g'\circ i$ both glue $\alpha$ to $\beta$ but have different images, which contradicts that $\alpha<\beta$ is permissible. So $[\alpha'] \neq [\alpha]$.
\end{proof}

\noindent Next, we give the definition of a symmetric CW-complex.

\begin{defn}\cite[Definition 2.1]{ACP-symCW}
A symmetric CW-complex is a Hausdorff topological space $X$ together with a partition into locally closed cells, such that, for each cell $C\subset X$, there is a continuous map from the quotient of the closed unit ball in $\R^p$ by a finite subgroup of the orthogonal group such that
\begin{enumerate}
    \item the quotient of the open unit ball maps homeomorphically onto $C$, and
    \item the image meets only finitely many cells, each of dimension less than $p$.
\end{enumerate}
A subset of $X$ is closed if and only if its intersection with closure of each cell is closed.
\label{defn: sym CW cx}
\end{defn}

\section{Discrete Morse Theory for symmetric $\Delta$-complexes}
\label{sec: dmt}
In this section, we develop a discrete Morse theory for symmetric $\Delta$-complexes. We give the definition of a discrete Morse function following \cite{Forman+discrete-morse} and prove Theorem \ref{thm: main theorem}. We also discuss the combinatorial criterion on matchings on the poset of symmetric orbits of symmetric $\Delta$-complexes following \cite{Chari} and prove Theorem \ref{thm: main thm combo}.
\subsection{Discrete Morse Theory through discrete Morse functions}
\begin{defn}
A discrete Morse function on a symmetric $\Delta$-complex $X$ is a function \[f:\coprod_{p\geq 0} X_p \to \R\] that satisfies the following conditions:
\begin{enumerate}[(i)]
    \item If $\alpha, \beta \in X_p$ are in the same symmetric orbit, then $f(\alpha)=f(\beta)$;
    \item If $\alpha^{(p)} < \beta^{(p+1)}$ is not permissible, then $f(\alpha) < f(\beta)$;
    \label{defn: exceptions only occur along permissible pairs}
    \item For any $\alpha^{(p)}$, \[u_f(\alpha^{(p)}) := \# \{[\beta^{(p+1)}]:\alpha^{(p)}<\beta^{(p+1)} \text{ is permissible and } f(\alpha) \geq f(\beta)\} \leq 1;\]
    \label{defn: quantity of beta}
    \item For any $\alpha^{(p)}$, \[d_f(\alpha^{(p)}):=\# \{[\gamma^{(p-1)}]:\gamma^{(p-1)}<\alpha^{(p)} \text{ is permissible and } f(\alpha) \leq f(\gamma)\}\leq 1.\]
    \label{defn: quantity of gamma}
\end{enumerate}
\label{defn: DMF}
\end{defn}
Note that in conditions \eqref{defn: quantity of beta} and \eqref{defn: quantity of gamma}, we are counting the number of symmetric orbits. This is well-defined by Lemma \ref{lem: permissibility holds for sym orbits}. Figure \ref{fig: dmf on half triangle} shows several discrete functions on the half triangle, only one of which is a discrete Morse function.

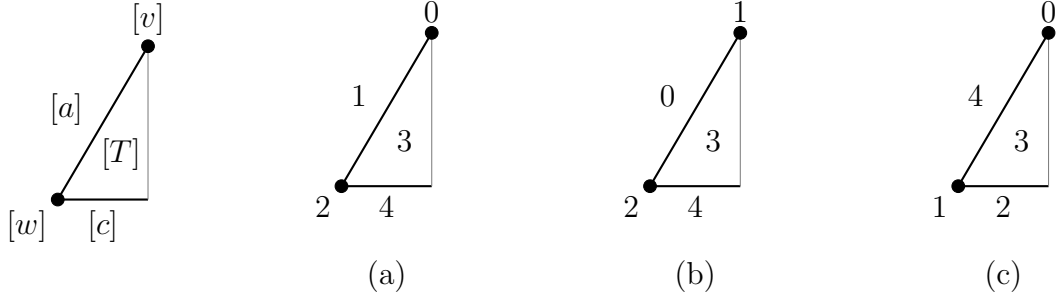
\begin{figure}[h]
    \centering
    \begin{minipage}{.24\textwidth}
    \begin{tikzpicture}[scale=1.2]
        \draw[thick] (1,1.7) -- (0,0) -- (1,0);
        \draw[thin,gray] (1,1.7) -- (1,0);
        
        \draw[fill] (0,0) circle [radius=2pt];
        \node[below left] at (0,0) {$[w]$};
        
        \draw[fill] (1,1.7) circle [radius=2pt];
        \node[above] at (1,1.7) {$[v]$};
        
        \node[left] at (0.4,1) {$[a]$};
        \node[below] at (0.5,0) {$[c]$};
        \node at (0.7,0.5) {$[T]$};
        
        \node at (0.5,-1) {};
    \end{tikzpicture}
    \end{minipage}
    \begin{minipage}{.24\textwidth}
    \begin{tikzpicture}[scale=1.2]
        \draw[thick] (1,1.7) -- (0,0) -- (1,0);
        \draw[thin,gray] (1,1.7) -- (1,0);
        
        \draw[fill] (0,0) circle [radius=2pt];
        \node[below left] at (0,0) {$2$};
        
        \draw[fill] (1,1.7) circle [radius=2pt];
        \node[above] at (1,1.7) {$0$};
        
        \node[left] at (0.4,1) {$1$};
        \node[below] at (0.5,0) {$4$};
        \node at (0.7,0.5) {$3$};
        
        \node at (0.5,-1) {(a)};
    \end{tikzpicture}
    \end{minipage}
    \begin{minipage}{.24\textwidth}
    \begin{tikzpicture}[scale=1.2]
        \draw[thick] (1,1.7) -- (0,0) -- (1,0);
        \draw[thin,gray] (1,1.7) -- (1,0);
        
        \draw[fill] (0,0) circle [radius=2pt];
        \node[below left] at (0,0) {$2$};
        
        \draw[fill] (1,1.7) circle [radius=2pt];
        \node[above] at (1,1.7) {$1$};
        
        \node[left] at (0.4,1) {$0$};
        \node[below] at (0.5,0) {$4$};
        \node at (0.7,0.5) {$3$};
        
        \node at (0.5,-1) {(b)};
    \end{tikzpicture}
    \end{minipage}
    \begin{minipage}{.24\textwidth}
    \begin{tikzpicture}[scale=1.2]
        \draw[thick] (1,1.7) -- (0,0) -- (1,0);
        \draw[thin,gray] (1,1.7) -- (1,0);
        
        \draw[fill] (0,0) circle [radius=2pt];
        \node[below left] at (0,0) {$1$};
        
        \draw[fill] (1,1.7) circle [radius=2pt];
        \node[above] at (1,1.7) {$0$};
        
        \node[left] at (0.4,1) {$4$};
        \node[below] at (0.5,0) {$2$};
        \node at (0.7,0.5) {$3$};
        
        \node at (0.5,-1) {(c)};
    \end{tikzpicture}
    \end{minipage}
    \caption{(a) is a discrete Morse function on the half triangle. (b) is not a discrete Morse function because $[a]$ has two faces with higher Morse values. (c) is not a discrete Morse function because $[a] < [T]$ is not permissible, so $[a]$ should have a smaller Morse value than $[T]$.}
    \label{fig: dmf on half triangle}
\end{figure}

Intuitively, a discrete Morse function assigns higher values to higher dimensional simplices while allowing for some exceptions. However, the conditions for a discrete Morse function are even more restrictive than they look, as shown by the following lemma.

\begin{lem}[Compare with Lemma 2.5 in \cite{Forman+discrete-morse}]
Let $f$ be a discrete Morse function on a symmetric $\Delta$-complex $X$. Let $p\geq 0$ and $\alpha \in X_p$. Then $u_f(\alpha)$ and $d_f(\alpha)$ cannot both attain 1.
\label{lem: DMF only one exception}
\end{lem}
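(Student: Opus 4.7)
The plan is to argue by contradiction: suppose both $u_f(\alpha) = 1$ and $d_f(\alpha) = 1$. This produces a permissible pair $\gamma^{(p-1)} < \alpha$ with $f(\gamma) \geq f(\alpha)$ and a permissible pair $\alpha < \beta^{(p+1)}$ with $f(\alpha) \geq f(\beta)$, so in particular $f(\gamma) \geq f(\beta)$. (The case $p = 0$ is vacuous since there are no $(p-1)$-dimensional faces, so $d_f(\alpha) = 0$.) Since $\alpha$ is a permissible face of $\beta$ and $\gamma < \alpha < \beta$, Lemma \ref{lem: permissible parent must have another child} produces a $p$-simplex $\alpha'$ with $[\alpha'] \neq [\alpha]$ and $\gamma < \alpha' < \beta$. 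The strategy is then to show that $[\alpha']$ joins $[\alpha]$ as a second exception to one of the counts $d_f(\beta)$ or $u_f(\gamma)$, violating Definition \ref{defn: DMF}.

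I would split into two cases depending on the Morse value $f(\alpha')$. If $\alpha' < \beta$ is permissible and $f(\alpha') \geq f(\beta)$, then both $[\alpha']$ and $[\alpha]$ contribute to $d_f(\beta)$, giving $d_f(\beta) \geq 2$, which contradicts condition (iv) at $\beta$. Otherwise, either $\alpha' < \beta$ is not permissible (in which case condition (ii) forces $f(\alpha') < f(\beta)$) or $\alpha' < \beta$ is permissible with $f(\alpha') < f(\beta)$; in either subcase $f(\alpha') < f(\beta) \leq f(\gamma)$. Now the face relation $\gamma < \alpha'$ must be permissible, because otherwise condition (ii) would force $f(\gamma) < f(\alpha')$, contradicting the inequality just derived. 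Hence $[\alpha']$ contributes to $u_f(\gamma)$ alongside $[\alpha]$, giving $u_f(\gamma) \geq 2$ and contradicting condition (iii) at $\gamma$.

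The main subtlety, and where this proof genuinely uses the symmetric $\Delta$-complex structure rather than merely mimicking Forman, is the role of permissibility: classically one would simply invoke regularity of face relations to produce $\alpha'$ and know that $\gamma < \alpha'$ and $\alpha' < \beta$ are again regular. Here we have neither automatic regularity nor a single ambient group action, so Lemma \ref{lem: permissible parent must have another child} supplies the replacement $\alpha'$, and condition (ii) of the discrete Morse function (together with the case split on which of the new face relations are permissible) ensures that $\alpha'$ still witnesses a violation of one of the counting conditions.
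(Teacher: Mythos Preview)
Your proof is correct and follows essentially the same route as the paper's: both assume $u_f(\alpha)=d_f(\alpha)=1$, invoke Lemma~\ref{lem: permissible parent must have another child} to produce $\alpha'$ with $[\alpha']\neq[\alpha]$ and $\gamma<\alpha'<\beta$, and then derive a contradiction from the counting conditions. The only difference is presentational: the paper compresses your case split into the single line ``by definition of a discrete Morse function, it must be true that $f(\gamma)<f(\alpha')<f(\beta)$,'' which implicitly uses exactly the permissibility/non-permissibility dichotomy you spell out, and then reads off the contradiction $f(\gamma)<f(\alpha')<f(\beta)\leq f(\alpha)\leq f(\gamma)$ directly.
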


\begin{proof}
If $p=0$ or $p=\dim X$, then the statement is true. Suppose $\gamma^{(p-1)}<\alpha^{(p)}<\beta^{(p+1)}$ are both permissible. Suppose we have $f(\gamma) \geq f(\alpha) \geq f(\beta)$. By Lemma \ref{lem: permissible parent must have another child}, there is another $p$-simplex $\alpha'$ such that $[\alpha']\neq [\alpha]$ and $\gamma < \alpha' < \beta$. By definition of a discrete Morse function, it must be true that $f(\gamma) < f(\alpha') < f(\beta)$. But that creates a contradiction:\[f(\gamma) < f(\alpha') < f(\beta) \leq f(\alpha) \leq f(\gamma).\] Contradiction!
\end{proof}

\begin{defn}
Let $X$ be a symmetric $\Delta$-complex and $f$ a discrete Morse function on it. A simplex $\alpha^{(p)}$ of $X$ is \textit{critical} if $u_f(\alpha^{(p)}) = 0$ and $d_f(\alpha^{(p)}) = 0.$
\label{defn: critical sx}
\end{defn}

Criticality is well-defined over symmetric orbits. In Figure \ref{fig: dmf on half triangle}(a), the vertex $[v]$ is the only critical simplex.

To prove the main theorem, we will take a similar approach as \cite{Forman+discrete-morse} by using elementary collapses along free faces. We shall define these concepts for symmetric $\Delta$-complexes.

\begin{defn}
Let $X$ be a symmetric $\Delta$-complex and $\alpha^{(p)}<\beta^{(p+1)}$ its simplices. We say $\alpha$ is a \textit{free face} of $\beta$ if there does not exist $\gamma^{(p+1)}$ with $[\gamma]\neq [\beta]$ such that $\alpha<\gamma$.
\label{defn: free face}
\end{defn}

Let $\Delta = \{\sum_{i=0}^p c_ie_i:\sum_{i=0}^p c_i = 1\}$ be the standard $p$-simplex where $\{e_0,\dots,e_p\}$ is the standard basis of $\R^{p+1}$. Let $\sigma \subset \Delta$ be the $(p-1)$-face of $\Delta$ defined by $c_p=0$. Recall that there is a strong deformation retraction from $\Delta$ to $\partial\Delta\setminus\{\sigma^\circ\}$ given by \[f:\Delta^p\times I \to \Delta^p, \quad (a,t) \mapsto a-tm_av,\] where $m_a = \min_{0\leq i\leq p-1}\{a_i\}$ and $v = (1,1,\dots,1,-p)$.

The following proposition is the main building block of the theory. We confirm that elementary collapse along a permissible face is a homotopy equivalence.
\begin{prop}
Let $X$ be a symmetric $\Delta$-complex and $Y\subset X$ be a subcomplex. Suppose $\gamma^{(p-1)} < \alpha^{(p)}$ is a free and permissible face. Suppose further that $(\bigcup_i X_i)\setminus (\bigcup_i Y_i) = [\gamma]\cup[\alpha]$. Then $|Y|$ is a strong deformation retract of $|X|$.
\label{prop: symmetric elementary collapse}
\end{prop}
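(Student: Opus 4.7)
The plan is to adapt Forman's explicit strong deformation retraction of the standard simplex onto its boundary minus a free face, and to check that the resulting homotopy is compatible with the symmetric identifications defining $|X|$. Explicitly, I will retract the image of $\Delta^p/\Aut(\alpha)$ in $|X|$ onto the union of the faces of $\partial\Delta^p$ other than the one corresponding to $[\gamma]$, and extend by the identity on $|Y|$.

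First I fix an injection $\rho : [p-1] \to [p]$ with $\rho^*(\alpha) = \gamma$ and set $\sigma = \rho_*(\Delta^{p-1}) \subset \Delta^p$. Permissibility condition~(1) says every injection gluing $\gamma$ to $\alpha$ has the same image, so $\sigma$ depends only on the pair $\gamma < \alpha$. Relabeling $[p]$ so that $\rho$ is the standard inclusion identifies $\sigma$ with $\{c \in \Delta^p : c_p = 0\}$, placing us in the setting where the formula $f_t(a) = a - t m_a v$ from the excerpt defines a strong deformation retraction of $\Delta^p$ onto $\partial\Delta^p \setminus \sigma^\circ$. By the argument following Definition~\ref{defn: permissible face}, every $\pi \in \Aut(\alpha)$ restricts to a permutation of $\im \rho$; after the relabeling, $\pi$ fixes the index $p$ and permutes $\{0,\ldots,p-1\}$, so both $m_a$ and $v = (1,\ldots,1,-p)$ are invariant under $\pi$. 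Hence $f_t$ is $\Aut(\alpha)$-equivariant and descends to a continuous homotopy $\bar{f}_t$ on $\Delta^p/\Aut(\alpha)$ which fixes $(\partial\Delta^p\setminus\sigma^\circ)/\Aut(\alpha)$ pointwise and deformation-retracts the whole quotient onto that subspace. I then define $F_t : |X| \to |X|$ to be $\bar{f}_t$ on the closed cell of $[\alpha]$ and the identity on $|Y|$.

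The main obstacle is verifying that this piecewise definition is well-defined and continuous on $|X|$, and it is here that the remaining hypotheses are consumed. Permissibility condition~(2) supplies an isomorphism $\Aut(\alpha) \to \Aut(\gamma)$ intertwining the two natural actions on $\sigma \cong \Delta^{p-1}$, so the gluing map identifies $\sigma^\circ/\Aut(\alpha)$ with the open cell $(\Delta^{p-1})^\circ/\Aut(\gamma)$ of $[\gamma]$; this ensures $\bar{f}_t$ respects all identifications coming from the $[\gamma]$-gluing and hence descends to the closed cell of $[\alpha]$ inside $|X|$. Freeness of $\gamma < \alpha$ guarantees that no $p$-simplex outside $[\alpha]$ is glued along $\sigma^\circ$, and together with the hypothesis $(\bigcup_i X_i)\setminus(\bigcup_i Y_i) = [\gamma]\cup[\alpha]$ this forces every face of $\alpha$ meeting $\partial\Delta^p\setminus\sigma^\circ$ to lie entirely in $|Y|$. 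Since $\bar{f}_t$ is the identity on $(\partial\Delta^p\setminus\sigma^\circ)/\Aut(\alpha)$, the two pieces of $F_t$ agree on overlaps, and continuity of $F_t$ follows from the universal property of the quotient topology on $|X|$ coming from \eqref{eqn: geometric realization}. The properties $F_0 = \id$, $F_t|_{|Y|} = \id$, and $F_1(|X|) \subset |Y|$ are built into $f_t$, so $F_t$ exhibits $|Y|$ as a strong deformation retract of $|X|$.
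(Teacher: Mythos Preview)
Your argument is correct and follows essentially the same route as the paper: reduce to the closed cell of $[\alpha]$, use the explicit linear retraction $a\mapsto a-tm_a v$ on $\Delta^p$, invoke permissibility~(1) to see that $\Aut(\alpha)$ fixes the vertex $p$ so the retraction descends to $\Delta^p/\Aut(\alpha)$, and then use permissibility~(2) to check that no further identifications occur on $\sigma^\circ$ so the map descends to $|X|$. The paper carries out the last step via an explicit case analysis on the number of vanishing coordinates of points $a,b\in\Delta^p$ with $i_\alpha(a)=i_\alpha(b)$, whereas you phrase it as the single observation that $\sigma^\circ/\Aut(\alpha)\cong(\Delta^{p-1})^\circ/\Aut(\gamma)$; these are the same argument in different clothing.
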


\begin{proof}
Let $X_\alpha$ be the full subcategory of $X$ with $\ob(X_\alpha) = \{\sigma \in X_i: \sigma \leq \alpha\}$ and $Z_\alpha = Y\cap X_\alpha$. So $|X_\alpha|$ is the closure of the symmetric orbit of the open $p$-simplex $\alpha$. The subcomplex $|Z_\alpha|$ consists of all proper faces of $[\alpha]$ except for $[\gamma]$. Suppose $f:|X_\alpha| \times I \to |X_\alpha|$ is a strong deformation retraction to $|Z_\alpha|$. Then we may extend this map to be a strong deformation retraction $\bar{f}$ from $|X|$ to $|Y|$ by assigning $\bar{f}(x,t)=x$ for all $x\not\in |X_\alpha|$. Therefore, we may replace $X$ with $X_\alpha$ in our proof. Let $Z = Z_\alpha$.

We give a strong deformation retraction from $|X|$ to $|Z|$. Without loss of generality, we may assume that the inclusion $i_{p-1}:[p-1]\to[p]$ satisfies $i_{p-1}^*(\alpha)=\gamma$.

Since $[\alpha]$ is the unique maximal dimensional orbit of $X$, the continuous map determined by $\alpha \in X_p$, given by \[i_\alpha: \Delta^p \to |X|, \quad a \mapsto [(\alpha,a)],\] is surjective. Furthermore, $i_\alpha$ is an open mapping, because it is the composition of \[\Delta^p \xrightarrow{j_\alpha} \coprod_{i=0}^{p} \left(X_i \times \Delta^i\right) \xrightarrow{\pi} |X|,\] where $j_\alpha(a) = (\alpha,a)$, which is open, and $\pi$ is the canonical projection. Therefore $i_\alpha$ is a quotient map and there is a homeomorphism $\Delta^p/\sim_{i_\alpha} \cong |X|$, where $a\sim_{i_\alpha}b$ if and only if $i_\alpha(a)=i_\alpha(b)$. 

Recall that $\Aut(\alpha) \subset S_{p+1}$, so it acts on $\Delta^p$ by permuting coordinates and we may define the quotient space $\Delta^p/\Aut(\alpha)$. The map $i_\alpha$ induces a map from $\Delta^p/\Aut(\alpha) \to |X|$, and by abuse of notation, we also call the induced map $i_\alpha$. We will first show that the strong deformation retraction $f:\Delta^p\times I \to \Delta^p$ of a usual $p$-simplex along a $(p-1)$-face descends to a strong deformation retraction $f':\Delta^p/\Aut(\alpha) \times I \to \Delta^p/\Aut(\alpha)$, and then show that $f'$ descends further to $\bar{f}:|X| \times I \to |X|$, as shown in the diagram below.
\[\begin{tikzcd}
\Delta^p \times I \arrow[r,"f"] \ar[d,"q \times \id"] & \Delta^p \ar[d,"q"] \\
\Delta^p/\Aut(\alpha) \times I \arrow[r,dashrightarrow,"f'"] \ar[d,"i_\alpha \times \id"] & \Delta^p/\Aut(\alpha) \ar[d,"i_\alpha"] \\
\lvert X \rvert \times I \ar[r,dashrightarrow,"\bar{f}"] & \lvert X \rvert
\end{tikzcd}\]

To obtain $f'$, we check that $f$ is constant on $\Aut(\alpha)$-orbits of $\Delta^p\times I$, where $\Aut(\alpha)$ acts on $I$ trivially. If $\gamma<\alpha$ is permissible and $\gamma$ is glued to $\alpha$ by the inclusion $i_{p-1}:[p-1] \to [p]$, then it follows from item (1) in Definition \ref{defn: permissible face} that for any $\pi\in\Aut(\alpha)$, we have $\im \pi = \im d = [p-1]$, i.e., $\pi(p)=p$. So any two points $a$ and $b$ in the same orbit of $\Aut(\alpha)$ differ by a permutation of the first $p$ coordinates. Since $f(a,t) = a-tm_av$, where $m_a$ is the value of the entry that is minimal among the first $p$ coordinates of $a$, we have $m_a = m_b$. Therefore, the function $f$ descends to a function on $\Delta^p/\Aut(\alpha)$.

Next, we check that $f'$ descends to $\bar{f}$. Suppose $\bar{a},\bar{b}\in\Delta^p/\Aut(\alpha)$ are distinct elements satisfying that $i_\alpha(\bar{a}) = i_\alpha(\bar{b})$. We claim that both $a$ and $b$ must be contained in the set $\{x\in \Delta^p: c_i=0 \text{ for some }i\neq p\}$. Suppose not. By assumption,
\begin{align*}
    i_\alpha(\bar{a}) &= i_\alpha(\bar{b}) \\
    [(\alpha,a)] &= [(\alpha,b)].
\end{align*} By unpacking the definition of $|X|$, we see that if $(\sigma,r) \sim (\tau,s)$, then $r$ and $s$ must have the same number of nonzero coordinates. Therefore, if one of $a$ and $b$ is in the interior of $\Delta^p$, then the other must be too. Thus $(\alpha,a)$ and $(\alpha,b)$ must be identified via an automorphism of $\alpha$, which contradicts that $\bar{a} \neq \bar{b}$. Now suppose $a$ has last coordinate zero and all others nonzero. That $i_\alpha(a) = i_\alpha(b)$ implies that there exist $(\sigma^{(p-1)},c)$ and $\theta,\phi:[p-1]\to[p]$ such that $(\alpha,a) \sim (\sigma^{(p-1)},c) = (\theta^*\alpha,c)$ and $(\alpha,b) \sim (\sigma^{(p-1)},c) = (\phi^*\alpha,c)$ in $|X|$. As $a$ has last coordinate zero, it must be true that $\im \theta = [p-1]$. So the simplex $\sigma \in [\gamma]$, but that means $\sigma<\alpha$ is permissible, and thus $\im \phi = [p-1]$ by item (1) of Definition \ref{defn: permissible face}. Therefore the point $b$ must be in the interior of the facet $c_p=0$ as well. But that means these two points can only be identified via some $\psi \in \Aut(\sigma)$ in $|X|$. But by item (2) of Definition \ref{defn: permissible face}, every automorphism of $\sigma$ is induced by an automorphism of $\alpha$, so it must be true that $\bar{a} = \bar{b} \in \Delta^p/\Aut(\alpha)$, which is a contradiction. If both $a$ and $b$ are in $\{x\in \Delta^p: c_i=0 \text{ for some }i\neq p\}$, then $f'(\bar{a},t) = \bar{a} = f'(\bar{b},t) = \bar{b}$. So $f'$ descends to $\bar{f}$ as desired.

The map $\bar{f}$ is continuous by the universal property of quotient spaces. The image of $\bar{f}$ is $\im(f\circ i_\alpha) = |Z|$ and one can verify that $\bar{f}(z,t)=z$ for $z\in |Z|\subset |X|$ for all $t$. Therefore $\bar{f}$ is a strong deformation retraction from $|X|$ to $|Z|$.
\end{proof}

Proposition \ref{prop: symmetric elementary collapse} shows that $|Y|$ is homotopy equivalent to $|X|$. We call such a reduction an \textit{elementary collapse} for a symmetric $\Delta$-complex. See Figure \ref{fig: elem collapse} for an example of an elementary collapse.

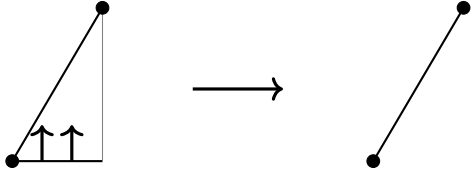
\begin{figure}[h]
    \centering
    \begin{tikzpicture}[scale=1.2]
        \draw[thick] (1,1.7) -- (0,0) -- (1,0);
        \draw[thin,gray] (1,1.7) -- (1,0);
        
        \draw[fill] (0,0) circle [radius=2pt];
        \draw[fill] (1,1.7) circle [radius=2pt];
        
        \draw[very thick,->] (0.33,0) -- (0.33,0.4);
        \draw[very thick,->] (0.66,0) -- (0.66,0.4);
        
        \draw[very thick,->] (2,0.8) -- (3,0.8);
        
        \draw[thick] (5,1.7) -- (4,0);
        \draw[fill] (4,0) circle [radius=2pt];
        \draw[fill] (5,1.7) circle [radius=2pt];
    \end{tikzpicture}
    \caption{Collapsing the 2-simplex in the half triangle along the bottom half edge is an elementary collapse.}
    \label{fig: elem collapse}
\end{figure}

\begin{defn}
Let $X$ be a symmetric $\Delta$-complex and $f$ a discrete Morse function on it. Let $c\in \R$. The level subcomplex $K(c)$ is the full subcategory of $X$ defined by \[\ob(K(c)) = \bigcup_{\substack{\alpha\in X_p \\ f(\alpha) \leq c}}\bigcup_{\beta < \alpha}\beta.\]
\end{defn}

\noindent In other words, $K(c)$ is the subcomplex of $X$ generated by simplices with discrete Morse value at most $c$. Figure \ref{fig: level subcx} shows different level subcomplexes of a given discrete Morse function.

\begin{figure}[h]
    \centering
    \begin{minipage}{0.23\textwidth}
    \begin{tikzpicture}[scale=1.2]
        \draw[thick] (1,1.7) -- (0,0) -- (1,0);
        \draw[thin,gray] (1,1.7) -- (1,0);
        
        \draw[fill] (0,0) circle [radius=2pt];
        \node[below left] at (0,0) {$2$};
        
        \draw[fill] (1,1.7) circle [radius=2pt];
        \node[above] at (1,1.7) {$0$};
        
        \node[left] at (0.4,1) {$1$};
        \node[below] at (0.5,0) {$4$};
        \node at (0.7,0.5) {$3$};
        
        \node at (0.5,-1) {};
    \end{tikzpicture}
    \end{minipage}
    \begin{minipage}{0.23\textwidth}
    \begin{tikzpicture}[scale=1.2]
        \draw[fill] (1,1.7) circle [radius=2pt];
        \node[above] at (1,1.7) {$0$};
        \node at (0,0) {};
        
        \node at (0.5,-1) {$K(0)$};
    \end{tikzpicture}
    \end{minipage}
    \begin{minipage}{0.23\textwidth}
    \begin{tikzpicture}[scale=1.2]
        \draw[thick] (1,1.7) -- (0,0);
        
        \draw[fill] (0,0) circle [radius=2pt];
        \node[below left] at (0,0) {$2$};
        
        \draw[fill] (1,1.7) circle [radius=2pt];
        \node[above] at (1,1.7) {$0$};
        
        \node[left] at (0.4,1) {$1$};
        
        \node at (0.5,-1) {$K(1)=K(2)$};
    \end{tikzpicture}
    \end{minipage}
    \begin{minipage}{0.23\textwidth}
    \begin{tikzpicture}[scale=1.2]
        \draw[thick] (1,1.7) -- (0,0) -- (1,0);
        \draw[thin,gray] (1,1.7) -- (1,0);
        
        \draw[fill] (0,0) circle [radius=2pt];
        \node[below left] at (0,0) {$2$};
        
        \draw[fill] (1,1.7) circle [radius=2pt];
        \node[above] at (1,1.7) {$0$};
        
        \node[left] at (0.4,1) {$1$};
        \node[below] at (0.5,0) {$4$};
        \node at (0.7,0.5) {$3$};
        
        \node at (0.5,-1) {$K(3)=K(4)$};
    \end{tikzpicture}
    \end{minipage}
    \caption{Given a discrete Morse function on the half triangle, as in the leftmost picture, we construct the level subcomplexes.}
    \label{fig: level subcx}
\end{figure}
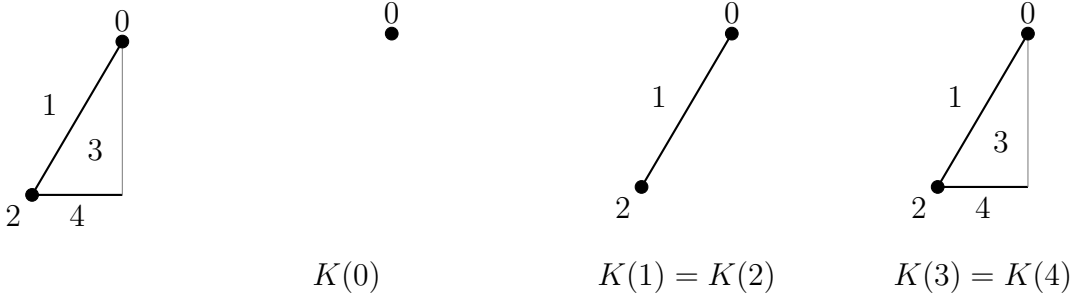

In general, to check if a simplex $\sigma$ with $f(\sigma) > c$ is in $K(c)$, we need to check if there is $\tau$ with $\sigma < \tau$ and $f(\tau) \leq c$. The next lemma shows that it is enough to check such $\tau$ with $\dim \tau = \dim \sigma + 1$.

\begin{lem}[Compare with Lemma 3.2 in \cite{Forman+discrete-morse}]
Let $X$ and $f$ be as before. Let $\sigma$ be a $p$-simplex and $\tau^{(k)} > \sigma^{(p)}$. Then there is a $(p+1)$-simplex $\tau'$ with $\sigma < \tau'$ and $f(\tau') \leq f(\tau)$.
\label{lem: level subcx only need to check one dim above}
\end{lem}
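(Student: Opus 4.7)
The plan is to mimic Forman's proof of Lemma~3.2 and induct on the dimension difference $d := k - p$. The base case $d = 1$ is trivial: take $\tau' = \tau$. So assume $d \geq 2$ and that the lemma holds for all smaller dimension gaps. The goal is to produce an intermediate $(k-1)$-simplex $\eta$ with $\sigma < \eta < \tau$ such that $f(\eta) \leq f(\tau)$; once that is done, the inductive hypothesis applied to $\sigma < \eta$ yields a $(p+1)$-simplex $\tau'$ with $\sigma < \tau'$ and $f(\tau') \leq f(\eta) \leq f(\tau)$.

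To find such an $\eta$, I would first pick an arbitrary $(k-1)$-simplex $\eta$ with $\sigma < \eta < \tau$ (which exists because $d \geq 2$). There are two cases based on $f(\eta)$ relative to $f(\tau)$.

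\textbf{Case A:} $f(\eta) \leq f(\tau)$. We are done by the induction hypothesis as above.

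\textbf{Case B:} $f(\eta) > f(\tau)$. By condition (ii) of Definition~\ref{defn: DMF}, the face relation $\eta < \tau$ must in fact be permissible (otherwise we would have $f(\eta) < f(\tau)$). Now I would invoke Lemma~\ref{lem: permissible parent must have another child} with $\gamma = \sigma$, $\alpha = \eta$, $\beta = \tau$ (which applies since $p < k-1$): this produces another $(k-1)$-simplex $\eta'$ with $[\eta'] \neq [\eta]$ and $\sigma < \eta' < \tau$. Condition (iv) of Definition~\ref{defn: DMF} tells us that $d_f(\tau) \leq 1$, and the orbit $[\eta]$ already saturates this bound since $\eta < \tau$ is permissible with $f(\eta) \geq f(\tau)$. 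Therefore $[\eta']$ does not contribute to $d_f(\tau)$, so either $\eta' < \tau$ is not permissible (giving $f(\eta') < f(\tau)$ by condition (ii)) or it is permissible with $f(\eta') < f(\tau)$. In either situation $f(\eta') \leq f(\tau)$, and we can apply the inductive hypothesis to $\sigma < \eta'$ to finish.

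The main subtlety, and the whole reason permissibility appears in the hypotheses of Lemma~\ref{lem: permissible parent must have another child}, is exactly Case B: we must guarantee the existence of a \emph{second} codimension-one simplex between $\sigma$ and $\tau$, distinct as a symmetric orbit from $\eta$, in order to exploit condition (iv). In ordinary discrete Morse theory the corresponding fact is immediate from the diamond property of face posets, but here it is precisely Lemma~\ref{lem: permissible parent must have another child} that supplies the required analogue. Once that lemma is in hand the argument is essentially Forman's induction, with conditions (ii) and (iv) of the discrete Morse function doing the bookkeeping.
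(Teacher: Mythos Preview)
Your proposal is correct and follows essentially the same inductive argument as the paper. The only cosmetic difference is in the case split: the paper first asks whether the chosen intermediate $(k-1)$-simplex $\nu$ is a permissible face of $\tau$ (and then invokes Lemma~\ref{lem: permissible parent must have another child} and condition~(iv) together to conclude that one of $\nu,\nu'$ has smaller $f$-value), whereas you first ask whether $f(\eta)\le f(\tau)$ and only in Case~B deduce permissibility via condition~(ii) before applying the same lemma. Both routes use Lemma~\ref{lem: permissible parent must have another child} at the key step and arrive at the same induction.
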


\begin{proof}
If $k = p+1$, then we let $\tau'=\tau$. Otherwise, let $\nu^{(k-1)}$ be such that $\sigma < \nu < \tau$. If $\nu$ is not a permissible face of $\tau$, then $f(\nu) < f(\tau)$. If it is, then by Lemma \ref{lem: permissible parent must have another child}, there exists $\nu'^{(k-1)}$ such that $[\nu']\neq [\nu]$ and $\sigma < \nu' < \tau$. By definition of $f$, either $f(\nu) < f(\tau)$ or $f(\nu') < f(\tau)$. We obtain the lemma by induction.
\end{proof}

\begin{thm}[Compare with Theorem 3.3 in \cite{Forman+discrete-morse}]
If there are no critical orbits $[\alpha]$ with $f(\alpha) \in (a,b]$, then $|K(b)|$ is homotopy equivalent to $|K(a)|$.
\label{thm: level subcx with no critical orbit}
\end{thm}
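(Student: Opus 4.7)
The plan is to mirror Forman's proof of Theorem 3.3 in \cite{Forman+discrete-morse} by decomposing the inclusion $|K(a)|\hookrightarrow|K(b)|$ into a finite sequence of elementary collapses in the sense of Proposition \ref{prop: symmetric elementary collapse}. First I would enumerate the finitely many $f$-values attained in $(a,b]$ as $c_1<c_2<\cdots<c_m$ and note that $K(a)=K(c_1-\epsilon)$ and $K(c_{i-1})=K(c_i-\epsilon)$ for all sufficiently small $\epsilon>0$. By induction on $m$, it suffices to prove $|K(c-\epsilon)|\simeq|K(c)|$ whenever $c$ is a single $f$-value such that no critical orbit has $f$-value $c$.

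Fix such a $c$ and let $S$ be the set of orbits with $f$-value exactly $c$. Every $[\sigma]\in S$ is non-critical by hypothesis, so $u_f(\sigma)+d_f(\sigma)\geq 1$; by Lemma \ref{lem: DMF only one exception} the sum equals $1$. I would pair up the elements of $S$ as follows. If $u_f(\sigma)=1$ with witness $[\tau^{(p+1)}]$, then necessarily $f(\tau)=c$, for otherwise $f(\tau)\leq c-\epsilon$ would place $\tau$, and hence its face $\sigma$, inside $K(c-\epsilon)$, contradicting $[\sigma]\in S$. The uniqueness clauses in conditions (iii) and (iv) of Definition \ref{defn: DMF} turn this matching into an involution on $S$, partitioning $S$ into pairs $\{[\gamma^{(p)}],[\sigma^{(p+1)}]\}$ with $\gamma<\sigma$ permissible and $f(\gamma)=f(\sigma)=c$.

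Finally, I would perform one elementary collapse per pair, processing them in order of decreasing $\dim\sigma$. For the pair $(\gamma,\sigma)$ currently being collapsed, I claim $\gamma$ is a free face of $\sigma$ in the current subcomplex. Any other coface $\gamma<\sigma'$ still present satisfies $f(\sigma')\leq c$; if $\gamma<\sigma'$ is not permissible then Definition \ref{defn: DMF}(ii) forces $f(\sigma')>c$, a contradiction; if it is permissible with $[\sigma']\neq[\sigma]$, then the bound $u_f(\gamma)\leq 1$ is already saturated by $[\sigma]$, again forcing $f(\sigma')>c$. Thus no such $\sigma'$ exists, Proposition \ref{prop: symmetric elementary collapse} applies, and the orbit pair $\{[\gamma],[\sigma]\}$ can be deleted at the cost of a strong deformation retraction. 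Iterating reaches $|K(c-\epsilon)|$ and completes the inductive step.

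The main obstacle I expect is bookkeeping rather than content. One must verify that the pairing is a bona fide involution with disjoint pairs, and that collapsing one pair does not destroy the free-face property of a later one. The former follows from the uniqueness in Definition \ref{defn: DMF}(iii)--(iv) together with the observation that two distinct orbits $[\gamma''],[\gamma]$ in $S$ cannot satisfy $\gamma''<\gamma$: such a face relation would be non-permissible (else $d_f(\gamma)\geq 1$ and $u_f(\gamma)=1$ violate Lemma \ref{lem: DMF only one exception}), so condition (ii) would force $f(\gamma'')<c$. The latter holds because deleting orbits only shrinks the coface set of any remaining $\gamma$, so freeness established in $K(c)$ persists throughout the iteration. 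With these in place, the argument is a direct analogue of the classical one.
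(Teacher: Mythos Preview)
Your overall strategy---step through the $f$-values one at a time and collapse the new simplices in pairs---is the same as the paper's. The paper, however, begins with a step you omit: it first perturbs $f$ to be \emph{injective on symmetric orbits} without changing $K(a)$, $K(b)$, or the set of critical orbits. After that there is exactly one orbit $[\sigma]$ with $f(\sigma)\in(a,b]$, and the dichotomy is clean: if $u_f(\sigma)=1$ then $\sigma$ is already in $K(a)$ so $K(b)=K(a)$; if $d_f(\sigma)=1$ with witness $[\alpha]$, then one checks $\ob(K(b))=\ob(K(a))\sqcup[\sigma]\sqcup[\alpha]$ and invokes Proposition~\ref{prop: symmetric elementary collapse}.

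Without the perturbation your pairing breaks. Your set $S$ consists of orbits with $f$-value exactly $c$, and you assert this partitions into permissible pairs $\{[\gamma^{(p)}],[\sigma^{(p+1)}]\}$ with $f(\gamma)=f(\sigma)=c$. But if $[\sigma]\in S$ has $d_f(\sigma)=1$ with witness $[\gamma]$, the definition only gives $f(\gamma)\geq f(\sigma)=c$, and strict inequality is perfectly possible (e.g.\ a permissible edge $\gamma^{(0)}<\sigma^{(1)}$ with $f(\gamma)=5$, $f(\sigma)=3$). Then $[\gamma]\notin S$, $|S|$ can be odd, and your ``involution on $S$'' does not exist. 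Your argument for the $u_f=1$ direction is also not quite right: ``$\sigma\in K(c-\epsilon)$'' does not contradict ``$f(\sigma)=c$'', it only says $\sigma$ need not be collapsed---which is precisely how the paper disposes of that case. The orbits you actually need to remove are those in $K(c)\setminus K(c-\epsilon)$, and this set can contain orbits with $f$-value strictly greater than $c$ (indeed, greater than $b$); these are exactly the witnesses $[\alpha]$ in the paper's case~(2). The cleanest fix is the paper's: perturb first so that at each step only one orbit appears, and then your two cases reduce to the paper's two cases verbatim.
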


\begin{proof}
We may perturb $f$ slightly so it is injective without changing $K(a)$, $K(b)$, or which simplices are critical.

If $f^{-1}((a,b]) = \emptyset$, then $K(b) = K(a)$. Otherwise, by the injectivity of $f$, we may assume there is exactly one symmetric orbit $[\sigma^{(p)}]$ with $f([\sigma]) \in (a,b]$. Since $[\sigma]$ is not critical, it satisfies exactly one of
\begin{enumerate}
    \item there exists $[\beta^{(p+1)}]>[\sigma]$ with $f(\beta) < f(\sigma)$;
    \item there exists $[\alpha^{(p-1)}] < [\sigma]$ with $f(\alpha) > f(\sigma)$.
\end{enumerate}
In case (1), we must have $f(\beta) \leq a$, so $\beta \in K(a)$. Since $\sigma < \beta$, we have $\sigma \in K(a)$. Therefore, $K(b) = K(a)$.

In case (2), we will show $\ob(K(b)) = \ob(K(a)) \sqcup [\sigma] \sqcup [\alpha]$. By Lemma \ref{lem: DMF only one exception}, all $\tau^{(p+1)} > \sigma$ satisfy $f(\tau) > f(\sigma)$, so $f(\tau) > b$. Therefore by Lemma \ref{lem: level subcx only need to check one dim above}, we have $\sigma \not \in K(a)$. Now if there is another $(p-1)$-simplex $\alpha'$ such that $[\alpha'] \neq [\alpha]$ and $\alpha' < \sigma$, then $f(\alpha') < f(\sigma)$, so $f(\alpha') < a$. Hence $\alpha'$ and all its faces are contained in $K(a)$. If $\sigma'$ is another $p$-simplex such that $[\sigma'] \neq [\sigma]$ and $\alpha < \sigma'$, then $f(\sigma') > f(\alpha) > b$, which means $\alpha \not \in K(a)$. Therefore, by Proposition \ref{prop: symmetric elementary collapse}, the geometric realization $|K(a)|$ is a strong deformation retract of $|K(b)|$.
\end{proof}

\noindent Let $B^p \subset \R^p$ be the closed unit ball. Let $O(p)$ be the orthogonal group of dimension $p$.

\begin{thm}[Compare with Theorem 3.4 in \cite{Forman+discrete-morse}]
Suppose $[\sigma]$ is the symmetric orbit of a critical $p$-simplex with $f(\sigma) \in (a,b]$ and $f^{-1}((a,b])$ contains no other critical orbits. Then there is a continuous map $F:\partial B^p/G_\sigma \to |K(a)|$, where $G_\sigma$ is a finite subgroup of $O(p)$ such that $|K(b)|$ is homotopy equivalent to $|K(a)|\cup_F B^p/G_\sigma$.
\label{thm: level subcx with critical orbit}
\end{thm}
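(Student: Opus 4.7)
The plan is to adapt Forman's proof of Theorem 3.4, with the one new feature being that the cell being attached is $B^p/G_\sigma$ rather than $B^p$. As in the proof of Theorem \ref{thm: level subcx with no critical orbit}, first perturb $f$ so that it is injective on symmetric orbits while preserving $K(a)$, $K(b)$, and the set of critical orbits, so that $[\sigma]$ becomes the unique orbit with $f$-value in $(a,b]$. Because $[\sigma]$ is critical, $d_f(\sigma)=0$ combined with condition (ii) of Definition \ref{defn: DMF} forces $f(\gamma)<f(\sigma)$ for every codimension-one face $\gamma<\sigma$ (permissible or not); by injectivity $f(\gamma)\leq a$, and iterating gives that every proper face of $\sigma$ lies in $K(a)$. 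Dually, $u_f(\sigma)=0$ with condition (ii) forces $f(\tau)>f(\sigma)$ for every $\tau^{(p+1)}>\sigma$, hence $f(\tau)>b$; combined with Lemma \ref{lem: level subcx only need to check one dim above}, this shows $\sigma\notin K(a)$. Thus $\ob(K(b))=\ob(K(a))\sqcup[\sigma]$, and writing $X_\sigma\subset X$ (resp.\ $Z_\sigma$) for the full subcategory on faces (resp.\ proper faces) of $\sigma$, we have $|K(b)|=|K(a)|\cup|X_\sigma|$ with $|K(a)|\cap|X_\sigma|=|Z_\sigma|$.

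Next, identify $|X_\sigma|$ with $B^p/G_\sigma$. As in the proof of Proposition \ref{prop: symmetric elementary collapse}, the characteristic map $i_\sigma:\Delta^p\to|X_\sigma|$ is a surjective quotient map. The group $\Aut(\sigma)\subset S_{p+1}$ acts on $\Delta^p\subset\R^{p+1}$ by coordinate permutations, fixing the barycenter and acting orthogonally on the tangent hyperplane $\{x:\sum x_i=0\}$; this gives an injection $\Aut(\sigma)\hookrightarrow O(p)$ with image $G_\sigma$, together with an $\Aut(\sigma)$-equivariant homeomorphism $\Delta^p\cong B^p$. Hence $i_\sigma$ factors as $\Delta^p\twoheadrightarrow B^p/G_\sigma\xrightarrow{\bar i_\sigma}|X_\sigma|$. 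An interior-identification analysis---any basic relation $(\sigma,\theta_*c)\sim(\theta^*\sigma,c)$ with $\theta_*c\in(\Delta^p)^\circ$ forces $\theta$ to be a permutation (else $\theta_*c$ would have a zero coordinate)---shows by induction on chain length that $\bar i_\sigma$ restricts to a homeomorphism of $(B^p)^\circ/G_\sigma$ onto the open orbit $[\sigma]^\circ\subset|K(b)|\setminus|K(a)|$.

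Define $F:\partial B^p/G_\sigma\to|K(a)|$ by restricting $\bar i_\sigma$, noting its image lies in $|Z_\sigma|\subset|K(a)|$. The inclusion $|K(a)|\hookrightarrow|K(b)|$ together with $\bar i_\sigma$ assembles into a continuous map $|K(a)|\cup_F B^p/G_\sigma\to|K(b)|$. Surjectivity is the decomposition $|K(b)|=|K(a)|\cup\im(\bar i_\sigma)$; injectivity follows from the interior embedding just established plus the fact that the remaining identifications (on $\partial B^p/G_\sigma$) are exactly those encoded by $F$. Both the pushout and $|K(b)|$ are compact Hausdorff, so the map is a homeomorphism, yielding the claimed homotopy equivalence.

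The main technical obstacle is the interior-injectivity step for $\bar i_\sigma$---tracking through chains of the generating relation of $\sim$ and confirming that interior points of $\Delta^p$ can only be identified via $\Aut(\sigma)$. This is the direct analog of the delicate identification computation in Proposition \ref{prop: symmetric elementary collapse}; notably, permissibility plays no role here, because interior points never participate in face gluings $\theta:[r]\to[p]$ with $r<p$.
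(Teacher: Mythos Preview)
Your proof is correct and follows the same route as the paper: reduce to a single critical orbit in $(a,b]$, verify that every proper face of $\sigma$ lies in $K(a)$ while $[\sigma]$ does not, realize $\Aut(\sigma)\subset S_{p+1}$ as a finite subgroup $G_\sigma\subset O(p)$ via the permutation action on the barycentric hyperplane, and attach $B^p/G_\sigma$ along its boundary image. Your interior-identification analysis and the resulting explicit homeomorphism are in fact more detailed than the paper's treatment; the one small caveat is that your final ``compact Hausdorff'' step presumes $K(b)$ has finitely many simplices (as in Theorem~\ref{thm: main theorem}), though a direct closed-map argument handles the general case.
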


\begin{proof}
As before, we may assume that $f$ is injective and $[\sigma]$ is the only symmetric orbit with $f(\sigma) \in (a,b]$. Since $\sigma$ is critical, for any $\tau^{(p+1)} > \sigma$, we have $f(\tau) > f(\sigma)$. So $f(\tau) > b$ for all such $\tau$. Lemma \ref{lem: level subcx only need to check one dim above} implies that $[\sigma] \cap K(a) = \emptyset$. On the other hand, for any $\nu^{(p-1)} < \sigma$, we have $f(\nu) < f(\sigma)$, so $f(\nu) < a$. That means all faces of $\sigma$ are contained in $K(a)$. So $\sigma$ defines a continuous map $i_\sigma:\Delta^p/\aut(\sigma) \to |K(b)|$. The map $i_\sigma$ satisfies that $i_\sigma|_{\partial \Delta^p/\aut(\sigma)}:\partial \Delta^p/\aut(\sigma) \to |K(a)|$ and that it is a homeomorphism on the quotient of the interior of $\Delta^p$. 

We identify the hyperplane in $\R^{p+1}$ containing $\Delta^p$ with $\R^p$ and by identifying the center of $\Delta^p$ with the origin of $\R^p$, we may extend the action of $S_{p+1}$ to $\R^p$. The symmetric group $S_{p+1}$ is generated by transpositions $(ij)$, where $i,j \in \{0,\dots,p\}$ and $i\neq j$. A transposition $(ij)$ acts on $\Delta^p$ and hence $\R^p$ by reflection, which can be identified with an element of $O(p)$. Therefore there is a natural identification between $\aut(\sigma)$ and a finite subgroup $G_\sigma$ of $O(p)$. Now $i_\sigma$ defines a continuous map $\Delta^p/G_\sigma \to |K(b)|$ where $\Delta^p$ is viewed as a subset of $\R^p$. By choosing a homotopy between $\Delta^p$ and $B^p$, we obtain $F:B^p/G_\sigma \to |K(b)|$ such that $F$ is a homotopy of $i_\sigma$ and $F$ satisfies that it maps $\partial B^p/G_\sigma$ to $|K(a)|$ and that it is a homeomorphism on the quotient of the interior of $B^p$. Thus $|K(b)| \simeq |K(a)|\cup_F B^p/G_\sigma$.
\end{proof}

\begin{proof}[Proof (of Theorem \ref{thm: main theorem})]
Let $X$ be a finite nonempty symmetric $\Delta$-complex with a discrete Morse function $f$. We may assume $f$ is injective on symmetric orbits as before. Since $X$ is finite, there exits finite values $c_0 = \min_{\sigma \in \cup X_p}\{f(\sigma)\}$ and $c_n = \max_{\sigma \in \cup X_p}\{f(\sigma)\}$. We claim that $|K(c_0)|$ is a point. Since $X$ is nonempty and $f$ is injective on symmetric orbits, there is exactly one symmetric orbit $[\sigma]$ with $f(\sigma) = c_0$. If $\sigma$ has dimension greater than 0, then it has at least two codimension-1 faces $\tau$ and $\tau'$ with $f(\tau) > f(\sigma)$ and $f(\tau') > f(\sigma)$. If $[\tau] = [\tau']$, then $(\tau,\sigma)$ is not permissible, so item (ii) of Definition \ref{defn: DMF} is violated. If $[\tau] \neq [\tau']$, then item (iv) of Definition \ref{defn: DMF} is violated. Therefore $\sigma$ is a 0-simplex, hence $|K(c_0)|$ is a single point.

Choose $c_0 < c_1 < \cdots < c_n$ so that there is exactly one symmetric orbit whose discrete Morse value is in $(c_i,c_{i+1})$ for $i=0,\dots,n-1$. By induction, we assume $|K(c_i)|$ is a symmetric CW-complex. By Theorems \ref{thm: level subcx with no critical orbit} and \ref{thm: level subcx with critical orbit}, either $|K(c_{i+1})| \simeq |K(c_i)|$ or $|K(c_{i+1})| \simeq |K(c_i)| \cup_F B^p/G$ for some finite subgroup $G$ of $O(p)$. In the latter case, we declare the image of $B^p/G$ in $|K(c_i)| \cup_F B^p/G$ to be a cell. Therefore $|K(c_n)| = |X|$ is a symmetric CW-complex with exactly one $p$-cell $(B^p)^\circ/G$ per critical symmetric orbit $[\alpha]$ of dimension $p$ with $G \cong \aut(\alpha)$.
\end{proof}

Figure \ref{fig: dmf collapse} shows how we can use a discrete Morse function to collapse simplices while maintaining the homotopy type of the complex.

\begin{figure}[h]
    \centering
    \begin{minipage}{0.23\textwidth}
    \begin{tikzpicture}[scale=1.2,baseline=-2cm]
        \draw[thick] (1,1.7) -- (0,0) -- (1,0);
        \draw[thin,gray] (1,1.7) -- (1,0);
        
        \draw[fill] (0,0) circle [radius=2pt];
        \node[below left] at (0,0) {$2$};
        
        \draw[fill] (1,1.7) circle [radius=2pt];
        \node[above] at (1,1.7) {$0$};
        
        \node[left] at (0.4,1) {$1$};
        \node[below] at (0.5,0) {$4$};
        \node at (0.7,0.5) {$3$};
        
        \node at (0.5,-1) {};
    \end{tikzpicture}
    \end{minipage}
    \begin{minipage}{0.23\textwidth}
    \begin{tikzpicture}[scale=1.2]
        \draw[thick] (1,1.7) -- (0,0) -- (1,0);
        \draw[thin,gray] (1,1.7) -- (1,0);
        \draw[fill] (0,0) circle [radius=2pt];
        \draw[fill] (1,1.7) circle [radius=2pt];
        
        \draw[very thick,->] (0.33,0) -- (0.33,0.4);
        \draw[very thick,->] (0.66,0) -- (0.66,0.4);
        
        \draw[very thick,->] (2,1) -- (3,1);
        
        \node at (0.5,-1) {$K(4) \searrow K(2)$};
    \end{tikzpicture}
    \end{minipage}
    \begin{minipage}{0.23\textwidth}
    \begin{tikzpicture}[scale=1.2]
        \draw[thick] (1,1.7) -- (0,0);
        
        \draw[fill] (0,0) circle [radius=2pt];
        \draw[fill] (1,1.7) circle [radius=2pt];
        
        \draw[very thick,->] (0,0) -- (0.3, 0.51);
        
        \draw[very thick,->] (2,1) -- (3,1);
        
        \node at (0.5,-1) {$K(2)\searrow K(0)$};
    \end{tikzpicture}
    \end{minipage}
    \begin{minipage}{0.23\textwidth}
    \begin{tikzpicture}[scale=1.2]
        \node at (0,0) {};
        
        \draw[fill] (1,1.7) circle [radius=2pt];
        
        \node at (0.5,-1) {$K(0)$};
    \end{tikzpicture}
    \end{minipage}
    \caption{The discrete Morse function on the left dictates a sequence of elementary collapses. The direction of deformation retraction is indicated by arrows at each step.}
    \label{fig: dmf collapse}
\end{figure}
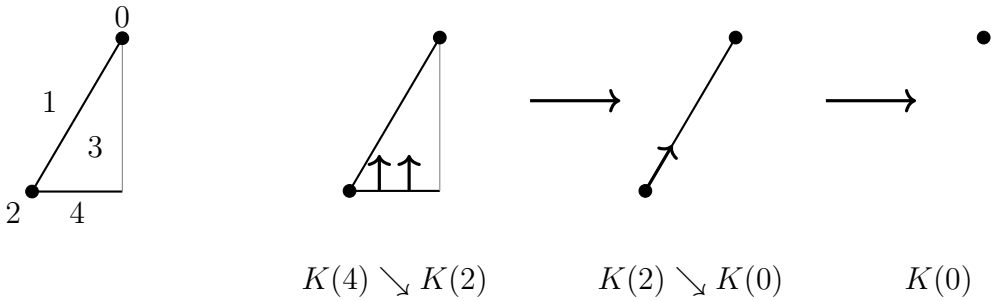

\begin{rmk}
A usual simplicial complex $C$ can be viewed as the geometric realization of a functor from $I^{\mathrm{op}}_{\mathrm{ord}} \to \text{Sets}$, where $I_{\mathrm{ord}}$ is the category of finite sets and order-preserving injections. For convenience we call this functor $C$ as well. There is a symmetric $\Delta$-complex $X_C$ whose geometric realization coincides with $C$ with the following description. On objects, $X_C([k]) = \coprod_{\sigma\in S_{k+1}} C([k])$. On morphisms, a permutation $\rho:[k] \to [k]$ is mapped to the function defined by $X_C(\rho)(\sigma,a) = (\rho\circ \sigma,a)$. The inclusion $i_k:[k] \to [k+1]$ is mapped to the function defined by $X_C(i_k)(\sigma,a) = (\pi,b)$ where $\pi$ and $b$ are described as follows. Let $i = \sigma \circ i_k$. Since $i$ is an injection, $\im i = [k+1]\setminus \{m\}$ for some $m$. Now there is exactly one order-preserving injection $h_m:[k] \to [k+1]$ whose image misses $m$. We define $\pi = h_m^{-1}|_{\im i} \circ i$ and $b = C(h_m)(a)$.

With this description, every pair $\alpha^{(p)}<\beta^{(p+1)}$ of simplices in $X_C$ is permissible, so there is a one-to-one correspondence between discrete Morse functions on $X_C$ using Definition \ref{defn: DMF} and usual discrete Morse functions on $C$. Furthermore, the collapses prescribed by our discrete Morse function on $X_C$ agree with the collapses given by the corresponding usual discrete Morse function on $C$.
\end{rmk}

\subsection{Discrete Morse Theory with a combinatorial description}
\label{sec: dmt combo description}

In this section, we give a combinatorial description of discrete Morse theory for symmetric $\Delta$-complexes. We start with some definitions from poset theory. Let $(P,<)$ be a poset. Let $\prec$ denote covering relations in $P$. In other words, $a \prec b \in P$ if there is no element $c\in P$ with $a < c < b$.

\begin{defn}\cite[Definition 11.1]{Kozlov}
A partial matching on a poset $P$ is a partial matching in the underlying graph of the Hasse diagram of $P$. It is a subset $M\subset P \times P$ such that
\begin{itemize}
    \item $(a,b) \in M$ implies that $a\prec b$;
    \item each element of $P$ belongs to at most one element in $M$.
\end{itemize}
If $(a,b) \in M$, we write $a = d(b)$ and $b=u(a)$, where $d$ stands for down and $u$ stands for up.

$M$ is acyclic if there does not exist a cycle \[b_1 \succ d(b_1) \prec b_2 \succ d(b_2) \prec \cdots \prec b_n \succ d(b_n) \prec b_1,\] with $n\geq 2$ and all $b_i\in P$ distinct.
\end{defn}

\begin{rmk}
The condition that a partial matching $M$ on a poset $P$ is acyclic can be interpreted intuitively as follows. Let $G_P$ the Hasse diagram of $P$ regarded as a graph. Orient the edges of $G_P$ from larger elements to smaller ones. Now given a partial matching $M$, reverse the orientations of the edges in $G_P$ if they are in $M$. The matching $M$ is acyclic if and only if the directed graph $G_P$ contains no directed cycles.
\label{rmk: acyclic matching in directed graph}
\end{rmk}

Let $X$ be a symmetric $\Delta$-complex. Let $H_X$ be the poset of symmetric orbits of $X$ with face relations. A partial matching $M$ on $H_X$ is called permissible if every element $([\alpha^{(p)}],[\beta^{(p+1)}]) \in M$ satisfy that $[\alpha] < [\beta]$ is permissible. We prove theorem \ref{thm: main thm combo}.

\begin{proof}[Proof of Theorem \ref{thm: main thm combo}]
It suffices to prove that there is a discrete Morse function $f$ on $X$ such that for each $([\alpha^{(p)}],[\beta^{(p+1)}]) \in M$, we have $f(\alpha^{(p)}) \geq f(\beta^{(p+1)})$. And the only critical symmetric orbits of $f$ are the ones that do not appear in $M$.

Let $G(M)$ be the directed graph as constructed in Remark \ref{rmk: acyclic matching in directed graph}. By \cite[Theorem 3.6]{Forman+users-guide}, there is a real-valued function $g$ of the vertices that is strictly decreasing along each directed path. This is the discrete Morse function that we seek.
\end{proof}

\begin{rmk}
A discrete Morse function $f$ gives a permissble acyclic matching $M$ on $H_X$ by setting \[M = \{([\alpha^{(p)}],[\beta^{(p+1)}]):\alpha < \beta, f(\alpha) \geq f(\beta), p\geq 0\}.\]
\end{rmk}

\section{Application to the moduli space of tropical abelian varieties}
\label{sec: Ag}
In this section, we apply our generalized discrete Morse theory to the link of the origin $LA^{\mathrm{trop},\mathrm{P}}_g$ in the moduli space of principally polarized tropical abelian varieties of dimension $g$ with respect to the perfect cone decomposition. This space is interesting because of its connection to the algebraic moduli stack $\mathcal{A}_g$ of principally polarized abelian varieties of dimension $g$. More specifically, the rational homology of the link is identified with the top-weight cohomology $\mathcal{A}_g$ \cite[Theorem 3.1]{brandt2020top}.

\subsection{Background on $A^{\mathrm{trop},\mathrm{P}}_g$}

A rational polyhedral cone $\sigma \subset \R^g$ is the non-negative real span of vector $v_1,\dots,v_n \in \Z^g$ \[\sigma := \R_{\geq 0}\langle v_1,\dots,v_n \rangle := \left\{ \sum_{i=1}^n \lambda_iv_i:\lambda \in \R, \lambda \geq 0\right\}.\] In this section all cones $\sigma \subset \R^g$ are assumed to be strong convex, i.e., $\sigma$ does not contain any nonzero linear subspaces of $\R^g$. The dimension of $\sigma$ is the dimension of its linear span. A face of $\sigma$ is a nonempty subset of $\sigma$ that minimizes a linear functional on $\R^g$, and it is a rational polyhedral cone itself. The one-dimensional faces of $\sigma$ are called extremal rays of $\sigma$. There are only finitely many of them. An automorphism of a strongly convex rational polyhedral cone permutes its extremal rays and is uniquely determined by this permutation.

Let $\R^{\binom{g+1}{2}}$ be the vector space of quadratic forms in $\R^g$ represented by $g \times g$ real symmetric matrices. Let $\Omega_g^\mathrm{rt}$ be the cone in $\R^{\binom{g+1}{2}}$ consisting of positive semidefinite quadratic forms whose kernel is defined over $\Q$. The group $\gl_g(\Z)$ acts on a quadratic form $Q$ by $h\cdot Q = hQh^t$, where $h\in \gl_g(\Z)$ and $h^t$ is its transpose. The cone $\Omega_g^\mathrm{rt}$ is preserved under the $\gl_g(\Z)$-action.

We recall the perfect cone decomposition of $\Omega_g^\mathrm{rt}$. It is an example of an admissible decomposition, see \cite[Definition 2.5]{brandt2020top}. Let $Q$ be a positive definite quadratic form. Define \[M(Q) := \{v \in \Z^g\backslash \{0\}:Q(v) \leq Q(v'), \forall v' \in \Z^g\backslash \{0\}\}.\] Define the cone associated to $Q$ by \[\sigma[Q] := \R_{\geq 0}\langle v\cdot v^t \rangle_{v\in M(Q)}.\] The rank of $\sigma[Q]$ is the maximum rank of an element of $\sigma[Q]$ and eqivalently the dimension of the span of $M(Q)$.

\begin{fct}\cite{voronoi}
The set of cones \[\Sigma_g^\mathrm{P} := \{\sigma[Q]:Q \text{ a positive definite form on }\R^g\}\] is an admissible decomposition of $\Omega_g^\mathrm{rt}$. It is called the perfect cone decomposition.
\end{fct}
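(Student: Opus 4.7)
The plan is to verify the five defining properties of an admissible decomposition (as recalled in \cite[Definition 2.5]{brandt2020top}): each $\sigma[Q]$ is a strongly convex rational polyhedral cone contained in $\Omega_g^{\mathrm{rt}}$; the family $\Sigma_g^{\mathrm{P}}$ is closed under taking faces; any two cones meet in a common face; the cones cover $\Omega_g^{\mathrm{rt}}$; and $\gl_g(\Z)$ acts on the collection with only finitely many orbits. The auxiliary notion I would organize the proof around is that of a \emph{perfect form}: a positive definite $Q$ such that the linear system $\{H(v) = Q(v) : v \in M(Q)\}$ in the unknown symmetric form $H$ has $Q$ as its unique positive ray of solutions, equivalently $\dim \sigma[Q] = \binom{g+1}{2}$. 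The first structural step I would carry out is the face description: whenever $Q'$ is perfect, every face $\tau$ of $\sigma[Q']$ equals $\sigma[Q]$ for any $Q$ in the relative interior of $\tau$, with $M(Q) \subseteq M(Q')$ being exactly the minimal vectors that contribute generators to $\tau$. In particular $\Sigma_g^{\mathrm{P}}$ is closed under taking faces, and each $\sigma[Q]$ is rational polyhedral and strongly convex.

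Next I would handle the covering property. For positive definite $Q$, Voronoi's reduction algorithm walks through adjacent top-dimensional perfect cones, at each step crossing a codimension-one wall to a neighbor on which the Hermite invariant $\gamma(Q) := \min(Q)/\det(Q)^{1/g}$ strictly increases; since $\gamma$ is bounded above on $\Omega_g^{\mathrm{rt}}$ by Minkowski, this walk terminates at a perfect form whose cone contains $Q$ in some face. For a degenerate $Q \in \Omega_g^{\mathrm{rt}}$ with rational kernel $K$ of dimension $k$, the induced positive definite form on $\Z^g/K$ lies in a perfect cone of $\Omega_{g-k}^{\mathrm{rt}}$, which lifts to a face of a top-dimensional cone in $\Sigma_g^{\mathrm{P}}$ containing the original $Q$. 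This establishes covering.

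The main obstacle is showing that two distinct cones meet along a common face. For perfect $Q_1, Q_2$ with $\sigma[Q_1] \cap \sigma[Q_2]$ nontrivial, I would pick $Q'$ in the relative interior of the intersection and, invoking the face description above, argue that $\sigma[Q'] = \sigma[Q_1] \cap \sigma[Q_2]$ is simultaneously a face of each $\sigma[Q_i]$, with $M(Q') \subseteq M(Q_1) \cap M(Q_2)$ picking out the shared generators. The $\gl_g(\Z)$-equivariance is immediate because $M(hQh^t) = hM(Q)$ forces $h \cdot \sigma[Q] = \sigma[hQh^t]$. Finiteness of the orbit set is Voronoi's finiteness theorem, proved by a compactness argument on perfect forms normalized by $\min(Q) = 1$: the resulting set is bounded modulo $\gl_g(\Z)$, and perfectness is a discrete condition cutting out only finitely many orbits. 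The intersection-is-a-face property and Voronoi finiteness are the technical core of \cite{voronoi} and are the steps I expect to be the most delicate; everything else reduces to bookkeeping once these are in hand.
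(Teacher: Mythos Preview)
The paper does not prove this statement at all: it is recorded as a \texttt{fct} with a bare citation to \cite{voronoi} and no argument. There is therefore no ``paper's own proof'' to compare your proposal against. Your outline is a reasonable summary of the classical Voronoi theory of perfect forms, but for the purposes of this paper the result is simply imported from the literature, and you are expected to do the same rather than reprove it.
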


\begin{defn}\cite{brandt2020top}
A principally polarized tropical abelian variety, or tropical abelian variety for short, of dimension $g$ is $A = (\R^g/\Z^g,Q)$, where $Q$ is a positive semidefinite symmetric bilinear form on $\R^g$ with rational null space.
\end{defn}

Two tropical abelian varieties $(\R^g/\Z^g,Q)$ and $(\R^g/\Z^g,Q')$ are isomorphic if there is $h\in \gl_g(\Z)$ such that $Q'=hQh^t$.

\begin{defn}\cite{brandt2020top}
The moduli space of tropical abelian varieties of dimension $g$ with respect to an admissible decomposition $\Sigma$ of $\Omega^\mathrm{rt}_g$ is a generalized cone complex \[A^{\mathrm{trop},\Sigma}_g = \colim\{\sigma\}_{\sigma\in\Sigma}\] with arrows given by inclusion of faces composed with the action of $\gl_g(\Z)$ on $\Omega^\mathrm{rt}_g$.
\end{defn}

We denote by $A^{\mathrm{trop},\mathrm{P}}_g$ the tropical abelina varieties of dimension $g$ with respect to the perfect cone decomposition. Next, we define matroidal cones and cones with coloops.

\begin{defn}
A matroid $M=(E,\mathcal{C})$ on a finite set $E$ is a subset $\mathcal{C} \subset \mathcal{P}(E)\setminus \{\emptyset\}$, called the set of circuits of $M$, that satisfies the following axioms:

(C1) No proper subset of a circuit is a circuit.

(C2) If $C_1,C_2 \in \mathcal{C}$ are distinct and $c\in C_1\cap C_2$, then $(C_1\cup C_2)\setminus \{c\}$ contains a circuit.
\end{defn}

A matroid $M=(E,\mathcal{C})$ is simple if it has no circuits of length 1 or 2. It is representable over a field $k$ if there is a matrix $A$ over $k$ such that $E$ indexes the columns of $A$ and elements of $\mathcal{C}$ are exactly the minimal linearly dependent sets of columns. The matroid $M$ is regular if it is representable over every field.

A matrix is totally unimodular if every minor is -1,0, or 1. The rank of a regular matroid $M$ is the smallest number $r$ such that $M$ is represented by an $r \times n$ totally unimodular matrix over $\R$ for some $n$.

Given a simple, regular matroid $M$ of rank $\leq g$, we may choose a $g\times n$ totally unimodular matrix $A$ that represents it. Let the columns of $A$ be $v_1,\dots,v_n$. Define the cone associated to $A$ by \[\sigma_A(M) := \R_{\geq 0} \langle v_1v_1^t,v_2v_2^t,\dots,v_nv_n^t \rangle.\] The cone $\sigma_A(M)$ is a perfect cone in $\Sigma_g^\mathrm{P}$, see \cite[Theorem 4.2.1]{melo-viviani-perfect-decomp}. Further more, if two matrices $A$ and $A'$ both represent $M$, then $\sigma_A(M)$ and $\sigma_{A'}(M)$ are related by a $\gl_g(\Z)$-action, so we denote the $\gl_g(\Z)$-orbit of $\sigma_A(M)$ by $\sigma(M)$. An $n$-dimensional cone is called a matroidal cone if it arises this way.

\begin{defn}
The regular matroid cone complex $A_g^\mathrm{R}$ in $A^{\mathrm{trop},\mathrm{P}}_g$ is the union of all matroidal cones.
\end{defn}

The regular matroid cone complex is a subcomplex because the faces of a matroidal cone are matroidal cones. Matroidal cones are simplicial.

We further define the coloop cone subcomplex of $A_g^\mathrm{R}$. An element $e$ of a matroid $M$ is called a coloop if it does not belong to any circuit of $M$. When $M$ is regular, this means there is a totally unimodular matrix $A = [v_1, v_2,\dots,v_n]$ such that the $n$th entry of each column $(v_i)_n = 0$ for $i=1,\dots,n-1$ and $v_n = (0,\dots,0,1)$ that represents $M$, where the element $e$ indexes the last column.

Recall that the rank of a perfect cone $\sigma[Q]$ is defined to be the maximum rank of an element in $\sigma[Q]$.

\begin{lem}
Let $\sigma(M)$ be a matroidal cone. Then the rank of $\sigma(M)$ is equal to the rank of $M$.
\end{lem}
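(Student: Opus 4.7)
The plan is to write a general element of $\sigma_A(M)$ explicitly and compute its rank in terms of the defining columns of $A$, then recognize the maximum as the matrix rank of $A$, which in turn equals the matroid rank of $M$.

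First, I would fix a totally unimodular $g\times n$ matrix $A=[v_1,\dots,v_n]$ representing $M$ and observe that an arbitrary element $Q\in\sigma_A(M)$ has the form
\[
Q \;=\; \sum_{i=1}^n \lambda_i\, v_i v_i^{t} \;=\; A\,\Lambda\,A^{t},
\]
where $\Lambda=\mathrm{diag}(\lambda_1,\dots,\lambda_n)$ with $\lambda_i\geq 0$. Since $\Lambda$ is positive semidefinite, writing $\Lambda=D^2$ with $D=\mathrm{diag}(\sqrt{\lambda_1},\dots,\sqrt{\lambda_n})$ gives $Q=(AD)(AD)^{t}$, so $\mathrm{rank}(Q)=\mathrm{rank}(AD)$. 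The columns of $AD$ are the vectors $\sqrt{\lambda_i}\,v_i$, which vanish when $\lambda_i=0$ and are nonzero scalings of $v_i$ when $\lambda_i>0$. Consequently $\mathrm{rank}(Q)$ equals the rank of the subset of columns $\{v_i:\lambda_i>0\}\subset\R^g$.

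Next, I would maximize over $Q\in\sigma_A(M)$: choosing all $\lambda_i>0$ (e.g.\ $\lambda_i=1$ for every $i$) yields $\mathrm{rank}(Q)=\mathrm{rank}(A)$, and no choice of $\lambda_i$ can exceed this since restricting to a subset of columns cannot increase the rank. Hence the rank of the cone $\sigma_A(M)$, defined as the maximum rank of its elements, equals the matrix rank of $A$. This is independent of the particular representative chosen for the $\gl_g(\Z)$-orbit $\sigma(M)$, because $h\cdot Q=hQh^{t}$ preserves matrix rank for $h\in\gl_g(\Z)$.

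Finally, I would identify $\mathrm{rank}(A)$ with $\mathrm{rank}(M)$. Because $A$ represents $M$, the independent sets of $M$ are exactly the linearly independent subsets of columns of $A$, so a basis of $M$ has size equal to the column rank, hence the matrix rank, of $A$. This matches the paper's definition of $\mathrm{rank}(M)$ as the smallest $r$ admitting a totally unimodular $r\times n$ representation: any such representation has matrix rank $\leq r$ and also equal to the size of a basis of $M$, so the minimum value of $r$ is precisely this common size. Combining the two equalities gives $\mathrm{rank}(\sigma(M))=\mathrm{rank}(A)=\mathrm{rank}(M)$. I expect the only mildly delicate point to be this last identification of the two notions of rank; the rest is a direct computation with $A\Lambda A^{t}$.
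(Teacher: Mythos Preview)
Your argument is correct and takes a genuinely different route from the paper. The paper first normalizes the representing matrix: starting from a totally unimodular $r\times n$ matrix $A$ (with $r=\mathrm{rank}(M)$), it rearranges columns so that the rightmost $r\times r$ block $B$ is invertible, replaces $A$ by $B^{-1}A$, and then exhibits the explicit element $\sum_{i=1}^r e_ie_i^{t}=I_r$ in $\sigma(M)$ as a witness of maximal rank $r$. By contrast, you work with an arbitrary representing matrix and compute the rank of a general element via the factorization $Q=A\Lambda A^{t}=(AD)(AD)^{t}$, reducing the problem to the column rank of $A$ without any preliminary normalization. Your approach is slightly slicker in that it handles the upper and lower bounds simultaneously and avoids the $\gl_r(\Z)$ step; the paper's approach has the virtue of producing a concrete maximal-rank element in a particularly transparent form. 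Both proofs ultimately rest on the identification $\mathrm{rank}(A)=\mathrm{rank}(M)$, which you justify correctly by observing that any $r\times n$ totally unimodular representation has column rank equal to the basis size of $M$, so the minimal such $r$ is exactly that size.
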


\begin{proof}
Let $M$ be a simple, regular matroid on the ground set $\{1,\dots,n\}$. Suppose $M$ has rank $r$. That means there is a totally unimodular matrix $A$ of dimension $r\times n$ representing $M$. Note that $A$ must be full-rank and have $r \leq n$ because otherwise $r$ is not minimum among matrices representing $M$. We may rearrange the columns of $A$ so that the rightmost $r \times r$ submatrix is full-rank; call this submatrix $B$. Since $A$ is totally unimodular, the matrix $B \in \gl_r(\Z)$. So $B^{-1}A$ still represents $M$, and the rightmost $r\times r$ submatrix is the identity. Using $B^{-1}A$, the cone $\sigma(M)$ has the following description: \[\sigma(M) = \R_{\geq 0}\langle v_1v_1^t,\dots,v_{n-r}v_{n-r}^t,e_1e_1^t,\dots,e_re_r^t\rangle ,\] where $e_i$ is the vector that has 1 in the $i$th entry and 0 elsewhere. One element in $\sigma(M)$ achieves the largest possible rank, namely $\Sigma_{i=1}^r e_ie_i^t = I_r$. Therefore the rank of $\sigma(M)$ is also $r$.
\end{proof}

\begin{defn}
The coloop locus $A_g^\mathrm{C}$ in $A_g^\mathrm{R}$ is the union of matroidal cones $\sigma$ such that either:
\begin{enumerate}[(1)]
    \item the rank of $\sigma$ is less than $g$, or
    \item the rank of $\sigma$ is equal to $g$ and $\sigma$ has one coloop.
\end{enumerate}
\end{defn}

\noindent The locus $A_g^\mathrm{C}$ is a subcomplex.

Let $\Sigma_{g}^\mathrm{P,R}$ be the collection of all matroidal perfect cones. Following \cite[Definition 5.7]{brandt2020top}, the set $(\Sigma_{g,\mathrm{nco}}^\mathrm{P}[n] \cap \Sigma_{g}^\mathrm{P,R}[n])/\gl_g(\Z)$ is the $\gl_g(\Z)$-orbits of $n+1$-dimensional matroid cones of rank $< g$ with no coloops. The inflation map defined in \cite[Definition 5.8]{brandt2020top} restricts to $(\Sigma_{g,\mathrm{nco}}^\mathrm{P}[n] \cap \Sigma_{g}^\mathrm{P,R}[n])/\gl_g(\Z)$. Given an element in it, choose a representative \[\sigma(M) = \R_{\geq 0}\langle v_1v_1^t,\dots,v_nv_n^t \rangle\] such that the $g$th row is all zero. Then we have \[\ifl(\sigma(M)) = \R_{\geq 0}\langle v_1v_1^t,\dots,v_nv_n^t,e_ge_g^t \rangle.\] The image $\ifl(\sigma(M))$ is a matroidal cone given by the matrix $[v_1,\dots,v_n,e_g]$. So the image of $\ifl$ is contained in $(\Sigma_{g,\mathrm{co}}^\mathrm{P}[n+1] \cap \Sigma_{g}^\mathrm{P,R}[n+1])/\gl_g(\Z)$, matroidal cones of dimension $n+2$ that have exactly one coloop. Similarly, the deflation map $\dfl$ restricts to $(\Sigma_{g,\mathrm{co}}^\mathrm{P}[n+1] \cap \Sigma_{g}^\mathrm{P,R}[n+1])/\gl_g(\Z)$ and is inverse to $\ifl$.

\subsection{The link of the origin in $A^{\mathrm{trop},\mathrm{P}}_g$}

Given a cone $\sigma \subset \R^g$, we define the link of the origin in $\sigma$ to be $L\sigma = (\sigma-\{0\})/\R_{>0}$, where $\R_{>0}$ acts by scalar multiplication. A face morphism from $\sigma \to \sigma'$ induces a morphism $L\sigma \to L\sigma'$. So if $X = \colim_{i \in \mathcal{I}}\{\sigma_i\}$, then $LX = \colim_{i \in \mathcal{I}}(L(\sigma_i))$. In general, $LX$ is a symmetric CW-complex such that there is a map $B^p \to LX$ for each $p+1$-dimensional cone in the diagram of which $X$ is the colimit.

Let $\Lambda_g = LA^{\mathrm{trop},\mathrm{P}}_g$ be the link of the origin in $A^{\mathrm{trop},\mathrm{P}}_g$ and $\Lambda_g^\mathrm{C}$ the link of the coloop locus $A^\mathrm{C}_g$. Since matroidal cones are simplicial, $\Lambda_g^\mathrm{C}$ is a symmetric $\Delta$-complex. We give a description of it as a functor from $I^\mathrm{op} \to \text{Sets}$. We refer to simplices as cones, with the understanding that we are taking the links of the cones discussed. Choose a representative $M$ for each $\gl_g(\Z)$-orbit of $n+1$-dimensional matroidal cones and choose a representative matrix $A$ for $M$. We have $\Lambda_g^\mathrm{C}([n]) = \{([\sigma_A(M)],\rho):[\sigma_A(M)] \in \Sigma_{g}^\mathrm{P,R}[n]/\gl_g(\Z), \rho: \{\text{extremal rays of }\sigma_A(M)\} \to [n]\}$. Morphisms in $I^\mathrm{op}$ are generated by permutations $\pi:[n] \to [n]$ and inclusions $i_{n-1}:[n-1] \to [n]$, so we describe their images under $\Lambda_g^\mathrm{C}$. The morphism $\Lambda_g^\mathrm{C}(\pi)$ sends $([\sigma_A(M)],\rho)$ to $([\sigma_A(M)],\pi\circ\rho)$. The map $\Lambda_g^\mathrm{C}(i_{n-1})$ sends $([\sigma_A(M)],\rho)$ to $([\sigma_{A'}(M')],\rho')$, where $A'$ is the matrix obtained after removing the last column of $A$, $M'$ is the matroid represented by $A'$ and $\rho'$ is the ordering of extremal rays inherited from $\rho$.

\begin{prop}
Let $\sigma$ be a matroidal cone of rank $< g$ that has no coloop. Then $\sigma$ is a permissible face of $\ifl(\sigma)$.
\end{prop}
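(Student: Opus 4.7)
The plan is to verify the two conditions of Definition \ref{defn: permissible face} for the pair $\sigma < \ifl(\sigma)$, exploiting the fact that $\ifl(\sigma)$ adds a single coloop ray $e_g e_g^t$ to $\sigma$. The first preliminary step is to observe that the matroid $M'$ underlying $\ifl(\sigma) = \sigma_{[A,e_g]}(M')$ has exactly one coloop, namely the element corresponding to $e_g$. Indeed, $e_g$ is not in the $\R$-span of the columns $v_1,\dots,v_n$ of $A$, which all have zero $g$th coordinate, so the new element is a coloop of $M'$; conversely, each $v_i$ belongs to some circuit of $M$ by the no-coloop hypothesis on $\sigma$, and that circuit is still a circuit of $M'$ because adjoining $e_g$ creates no new linear dependencies among $v_1,\dots,v_n$.

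For condition (1) of Definition \ref{defn: permissible face}, I would argue as follows. Fix labelings so that $\sigma$ appears as a face of $\ifl(\sigma)$ through an injection $\rho:[n]\to[n+1]$, and suppose $\theta,\theta':[n]\to[n+1]$ both glue a simplex in $[\sigma]$ to $\ifl(\sigma)$. Each such injection corresponds to forgetting the extremal ray of $\ifl(\sigma)$ whose label is the unique element of $[n+1]\setminus\im\theta$. If the forgotten ray were not the coloop, the resulting cone would still contain the coloop $e_g e_g^t$, and hence its underlying matroid would have a coloop, contradicting the hypothesis that $\sigma$ has no coloop. Thus both $\theta$ and $\theta'$ must omit the label of the coloop ray, so $\im\theta=\im\theta'$.

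For condition (2), I would use the identification $\Aut(\alpha)\cong\aut_{\mathrm{cone}}(\sigma_A(M))$ for a symmetric orbit $[\sigma_A(M)]$, which for regular matroidal cones coincides with the matroid automorphism group $\Aut(M)$. Under this dictionary, $f_\rho$ translates into the restriction-to-non-coloop map $\Aut(M')\to\Aut(M)$. This is an isomorphism: every $\pi\in\Aut(M')$ sends coloops to coloops and therefore fixes the unique coloop of $M'$, so it restricts to a matroid automorphism of $M$; conversely, every $\tau\in\Aut(M)$ extends uniquely to an element of $\Aut(M')$ by fixing the coloop, because the circuits of $M'$ are exactly those of $M$ (as $e_g$ lies in no circuit). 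These two constructions are mutually inverse, so $f_\rho$ is a bijective group homomorphism.

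The main obstacle, and what requires the most care, is the bookkeeping around the equivalences between symmetric orbits of simplices in $\Lambda_g^{\mathrm{C}}$, cone automorphism classes modulo $\gl_g(\Z)$, and matroid automorphisms of the corresponding regular matroid; in particular one must invoke that cone automorphisms of a regular matroidal cone correspond bijectively to matroid automorphisms, so that deleting or adding a distinguished element (here the unique coloop) induces the expected maps on automorphism groups. Once this dictionary is in place, both permissibility conditions reduce to the single structural fact that $\ifl(\sigma)$ contains a unique coloop, which is therefore preserved by every automorphism and by every face map realizing $\sigma$ as a codimension-one face.
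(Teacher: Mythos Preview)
Your proposal is correct and follows essentially the same route as the paper: both arguments verify condition~(1) by observing that removing any ray other than the coloop ray $e_g e_g^t$ from $\ifl(\sigma)$ yields a cone whose matroid still has a coloop, contradicting the hypothesis on $\sigma$; and both handle condition~(2) via the natural bijection $\Aut(\ifl(\sigma))\cong\Aut(\sigma)$. The only difference is that the paper imports this bijection from the proof of \cite[Lemma~5.13]{brandt2020top}, whereas you supply a self-contained matroid-theoretic argument (automorphisms of $M'$ fix the unique coloop and hence restrict to automorphisms of $M$, and conversely extend uniquely). Your version is slightly more detailed but relies on the identification of cone automorphisms with matroid automorphisms, which you rightly flag as the point requiring care; the paper sidesteps this by citing the external lemma.
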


\begin{proof}
Suppose $\sigma$ is represented by the $g \times k$ matrix $A = [v_1,\dots,v_k]$. So $\sigma = \R_{\geq 0}\langle v_1v_1^t,\dots,v_kv_k^t \rangle$. Since $\sigma$ has rank $<g$, we may choose $A$ so that the last row is all zero. So $\ifl(\sigma)$ is represented by the matrix $[A|e_g]$. We check the two conditions in Definition \ref{defn: permissible face}.

Suppose $j:[k-1] \to [k]$ is an injection that glues $[\sigma]$ to $[\ifl(\sigma)]$. Suppose $l\in [k]$ is the only element not in the image of $j$. Then $[\sigma] = [\sigma_{A'}]$ where $A'$ is the matrix $A$ with the $l$th column removed. If $l\neq k$, then $[\sigma]$ has a coloop, as represented by the last column of $A'$, which is a contradiction.

By the proof of \cite[Lemma 5.13]{brandt2020top}, there is a natural bijection between $\aut(\sigma)$ and $\aut(\ifl(\sigma))$. By the previous paragraph, there is a map from $\aut(\ifl(\sigma)) \to \aut(\sigma)$, so it is an ismorphism.
\end{proof}

\begin{prop}
The set $S = \{([\sigma],[\ifl(\sigma)]): \sigma \in \Lambda_g^\mathrm{C}, \sigma \text{ has rank $< g$ and no coloop}\}$ is a permissible acyclic matching on the face poset of $\Lambda_g^\mathrm{C}$.
\label{lem: acyclic matching on Lambda_g^C}
\end{prop}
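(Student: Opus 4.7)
The plan is to verify the three conditions defining a permissible acyclic matching in turn: $S$ is a partial matching, every pair in $S$ is permissible, and $S$ contains no cycles.

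First, $S$ is a partial matching on the Hasse diagram. Each pair is a covering relation because $\dim \ifl(\sigma) = \dim \sigma + 1$ and $\sigma$ is a face of $\ifl(\sigma)$ by construction. An orbit cannot appear in two distinct pairs: an orbit appearing as a down element has no coloop (by the defining hypothesis on $S$), whereas an orbit appearing as an up element is of the form $[\ifl(\sigma)]$ and therefore has exactly one coloop; the two conditions are incompatible. Moreover, $\ifl$ descends to an injection on orbits with inverse $\dfl$, so $[\ifl(\sigma)] = [\ifl(\sigma')]$ implies $[\sigma] = [\sigma']$. Permissibility of each pair in $S$ is then immediate: the previous proposition states that $\sigma$ is a permissible face of $\ifl(\sigma)$ for any chosen representative, and Lemma~\ref{lem: permissibility holds for sym orbits} extends this to orbits.

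The heart of the argument is acyclicity. Using the reinterpretation in Remark~\ref{rmk: acyclic matching in directed graph}, a hypothetical cycle in $S$ would yield an alternating chain
\[
[\ifl(\sigma_1)] \succ [\sigma_1] \prec [\ifl(\sigma_2)] \succ [\sigma_2] \prec \cdots \prec [\ifl(\sigma_n)] \succ [\sigma_n] \prec [\ifl(\sigma_1)]
\]
with $n\geq 2$ and the orbits $[\ifl(\sigma_i)]$ pairwise distinct. The key claim is the matroid-theoretic statement that any facet of $[\ifl(\sigma_{i+1})]$ other than $[\sigma_{i+1}]$ itself has a coloop. To see this, let $M$ denote the underlying matroid of $\ifl(\sigma_{i+1})$ and $e$ its unique coloop; facets of the cone correspond to deleting a single element of $M$. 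Deleting $e$ returns $\sigma_{i+1}$ by definition of $\dfl$. Deleting any other element $f \neq e$ yields $M\setminus f$, whose circuits are exactly the circuits of $M$ not containing $f$; in particular $e$ still lies in no circuit, so $e$ remains a coloop of $M\setminus f$. Since coloop-freeness is a matroid invariant, the facet orbit still carries a coloop.

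Applying the claim to $[\sigma_i] \prec [\ifl(\sigma_{i+1})]$ together with the hypothesis that $\sigma_i$ has no coloop forces $[\sigma_i]=[\sigma_{i+1}]$, hence $[\ifl(\sigma_i)]=[\ifl(\sigma_{i+1})]$; iterating around the cycle contradicts the distinctness of the $b_i$'s. Therefore $S$ is acyclic. The main obstacle in the argument is precisely this matroid-theoretic input — once one knows that coloops persist under deletion of non-coloop elements, the cycle analysis collapses and the remaining conditions are routine bookkeeping around the definitions of $\ifl$ and $\dfl$.
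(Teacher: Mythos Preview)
Your proof is correct and follows essentially the same approach as the paper: both arguments hinge on the observation that deleting a non-coloop element from the matroid underlying $\ifl(\sigma)$ leaves the distinguished coloop intact, so the only coloop-free facet of $[\ifl(\sigma)]$ is $[\sigma]$ itself, which immediately kills any putative cycle. You spell out the partial-matching and permissibility checks that the paper dismisses as ``clear,'' and you phrase the acyclicity contradiction as $[\sigma_i]=[\sigma_{i+1}]$ for every $i$, whereas the paper isolates the single edge $[d(\tau_n)]\prec[\tau_1]$; these are cosmetic differences over the same matroidal fact.
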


\begin{proof}
It is clear that $S$ is a permissible matching. We show it is acyclic. Suppose there is a cycle \[[\tau_1] \succ [d(\tau_1)] \prec [\tau_2] \succ [d(\tau_2)] \prec \cdots \prec [\tau_n] \succ [d(\tau_n)] \prec [\tau_1].\] Then $[d(\tau_1)]$ and $[d(\tau_n)]$ are two distinct faces of $\tau_1$, but $\tau_1$ has exactly one coloop, so at least one of $[d(\tau_1)]$ and $[d(\tau_n)]$ has a coloop. Without loss of generality we can assume $[d(\tau_n)]$ has a coloop. But that means $([d(\tau_n)],[\tau_n])$ cannot be in $S$.
\end{proof}

\begin{lem}
Let $\sigma(M)$ be a representative of an isomorphism class of cones in $\Lambda_g^\mathrm{C}$ such that $[\sigma(M)]$ is not contained in any matching in $S$. Then either $\sigma(M)$ represents the unique orbit of rank 1 matriodal cones or $\aut(\sigma(M))$ contains a reflection.
\label{lem: Lambda_g^C reflection}
\end{lem}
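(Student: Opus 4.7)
The plan is to characterize the unmatched orbits in $\Lambda_g^\mathrm{C}$ by a case analysis on the rank and number of coloops of $M$, and then exhibit a reflection in $\aut(\sigma(M))$ in every unmatched case other than the rank $1$ orbit.

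The orbit $[\sigma(M)]$ fails to be matched by $S$ iff (a) $\sigma(M)$ cannot serve as the smaller element of a pair in $S$ (equivalently, $\sigma(M)$ has rank $=g$ or has at least one coloop), and (b) $\sigma(M)$ is not in the image of $\ifl$ (equivalently, $\sigma(M)$ does not have exactly one coloop, or $\dfl(\sigma(M))$ is not a cone in $\Lambda_g^\mathrm{C}$). Since $\ifl$ appends an $e_g$ column producing a unique coloop, its image consists precisely of matroidal cones with exactly one coloop whose deflation is still a nonempty cone in $\Lambda_g^\mathrm{C}$ (so of rank $\geq 2$). I'll split into cases: (i) rank $<g$ with no coloops, matched as the smaller element of $([\sigma], [\ifl(\sigma)])$; (ii) exactly one coloop with rank $\geq 2$, matched as the larger element of $([\dfl(\sigma)], [\sigma])$ since $\dfl(\sigma)$ has rank $\geq 1$ and no coloops; (iii) exactly one coloop with rank $1$, i.e., the unique rank $1$ matroidal orbit, unmatched because $\dfl$ would produce the empty cone (not in $H_{\Lambda_g^\mathrm{C}}$); (iv) rank $<g$ with $\geq 2$ coloops, unmatched since (a) fails by having a coloop and (b) fails because the image of $\ifl$ has exactly one coloop. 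This exhausts $\Lambda_g^\mathrm{C}$, since any rank $=g$ cone there must carry exactly one coloop and hence land in (ii) or (iii).

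For case (iv), $M$ decomposes as $M = M' \oplus U_{k,k}$ where $k \geq 2$ is the number of coloops and $M'$ is the coloop-free part. The symmetric group $S_k$ acting on the coloops embeds into $\aut(M)$, since permuting coloops preserves the family of circuits. Via the natural identification of $\aut(M)$ with $\aut(\sigma(M))$, any transposition $(j\; j') \in S_k$ yields a cone automorphism that swaps the extremal rays $e_je_j^t$ and $e_{j'}e_{j'}^t$ and fixes all others. Following the construction in the proof of Theorem \ref{thm: level subcx with critical orbit}, under which $S_n$ acts on $\Delta^{n-1}$ as a subgroup of $O(n-1)$, any such transposition is realized as a reflection, which is exactly the conclusion of the lemma.

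The main technical point is the boundary behavior at rank $1$: $\dfl$ does not extend below rank $1$ in the simplicial complex, so (iii) is a genuine exception rather than an artifact of the case analysis. In (iv) the reflection drops out immediately from the coloop-generated $S_k$-action once the direct-sum decomposition is set up, so no further work is required beyond the enumeration.
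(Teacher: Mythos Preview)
Your proof is correct and follows essentially the same approach as the paper's: both argue that any cone in $\Lambda_g^\mathrm{C}$ with $0$ or $1$ coloops is matched in $S$ (except the rank $1$ orbit), so an unmatched cone has at least two coloops, and swapping two coloops yields a reflection in $\aut(\sigma(M))$. Your version is simply a more explicit case-by-case unpacking of the paper's two-sentence argument; the only minor quibble is that your restriction of case~(iv) to rank $<g$ hinges on reading ``$\sigma$ has one coloop'' in the definition of $A_g^\mathrm{C}$ as ``exactly one,'' but even on the reading ``at least one,'' the same reflection argument covers the extra case.
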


\begin{proof}
If $M$ contains 0 or 1 coloop, then $\sigma(M)$ is matched in $S$, unless $M$ is equivalent to the $1 \times 1$ matrix $[1]$. Otherwise $\sigma(M)$ has at least 2 coloops. There is an automorphism of $\sigma(M)$ that corresponds to exchanging those two elements, which is a reflection.
\end{proof}

\begin{proof}[Proof (of Theorem \ref{thm: coloop contractible})]
By Theorem \ref{thm: main thm combo}, the subcomplex $\Lambda_g^\mathrm{C}$ is homotopy equivalent to a symmetric CW-complex ${\Lambda_g^\mathrm{C}}'$ with exactly one $p$-cell $(B^p)^\circ/G$ for each symmetric orbit of $p$-simplices $[\sigma]$ that is not matched in $S$, where $G\subset O(p)$ is finite and isomorphic to $\aut(\sigma)$.

The complex ${\Lambda_g^\mathrm{C}}'$ can be built from the point that corresponds to the unique orbit of rank 1 matroidal cones as an iterated mapping cone, for a finite sequence of maps from $S^{p-1}/\aut(\sigma(M))$, where $\sigma(M)$ represents an isomorphism class of matroidal cones that is not matched in $S$. By Lemma \ref{lem: Lambda_g^C reflection} and \cite[Proposition 5.1]{ACP-symCW}, $S^{p-1}/\aut(\sigma(M))$ is contractible. Following the proof of \cite[Theorem 6.1]{ACP-symCW}, the complex ${\Lambda_g^\mathrm{C}}'$ is homotopy equivalent to a point.
\end{proof}

\bibliographystyle{alpha}
\bibliography{reference}

\end{document}